\documentclass{IEEEtran}

\usepackage{cite}
\usepackage{amsfonts}
\usepackage{mathrsfs}
\usepackage{graphicx}
\usepackage{textcomp}

\usepackage{xcolor}
\usepackage{booktabs}

%colors
\providecolor{backgroundblue}{rgb}{0.6,0.8,1}
\providecolor{backgroundorange}{rgb}{1,0.752,0}

%% amsthm stuff %%

\usepackage{amsthm}
\newtheorem{theorem}{Theorem}

\newtheorem{definition}{Definition}
\newtheorem{assumption}{Assumption}

\newtheorem{corollary}{Corollary}
\newtheorem{remark}{Remark}
\newtheorem{example}{Example}

%% math %%
\usepackage{amssymb} 
\usepackage{amsfonts}
\usepackage{mathtools}

\usepackage[bottom]{footmisc}
\usepackage{algorithmic}
\usepackage{algorithm}
\usepackage{comment}
\newboolean{showcomments}
\setboolean{showcomments}{false}

\newcommand{\fliu}[1]{\ifthenelse{\boolean{showcomments}}
        { \textcolor{red}{(FL:  #1)}}{}}
\newcommand{\xpeng}[1]{\ifthenelse{\boolean{showcomments}}
        { \textcolor{blue}{(XP:  #1)}}{}}
 \newcommand{\xru}[1]{\ifthenelse{\boolean{showcomments}}
        { \textcolor{violet}{(XR:  #1)}}{}}

%% reference %%
\usepackage[colorlinks = true,
            linkcolor = cyan,
            urlcolor  = cyan,
            citecolor = cyan,
            anchorcolor = cyan]{hyperref}

\begin{document}

\title{
% Bridging the Gap between Theory and Practice: A Graphic Tool for Passivization Analysis with Applications to Electric Power Systems
Positive Damping Region: A Graphic Tool for Passivization Analysis with Passivity Index
}
\author{
    Xiaoyu Peng,
    Xi Ru,
    Zhongze Li,
    Jianxin Zhang,
    Xinghua Chen,
    Feng Liu
    % <-this % stops a space
\thanks{This paper is supported by the Smart Grid National Science and Technology Major Project under Grant 2024ZD0801200 and the Science and Technology Project of China Southern Power Grid Co., Ltd under Grant GDKJXM20250201(036000KC25020005). (\textit{Corresponding Author: Feng Liu}).}
\thanks{Xiaoyu Peng, Xi Ru, Zhongze Li, and Feng Liu are with the Department of Electrical Engineering, Tsinghua University, China.
Feng Liu is also with the State Key Laboratory of
Power System Operation and Control, Tsinghua University,
China
(email: pengxy19@tsinghua.org.cn; thu.ruxi@gmail.com; 
lizz24@mails.tsinghua.edu.cn; lfeng@tsinghua.edu.cn).}
\thanks{Jianxin Zhang is with the Power Dispatch Control Center, China Southern Power Grid Corporation, China (email: zhangjianxin@csg.cn).} 
\thanks{Xinghua Chen is with the Power Dispatch Control Center, Guangdong Power Grid Corporation, China (email: soulchen@126.com).}
}

\maketitle

\begin{abstract}
This paper develops a graphical framework that bridges passivization theory and engineering practice for linear time-invariant systems. We reveal that a system is passivizable with a given passivity index when the Nyquist plot
for SISO systems or the Rayleigh quotient of the transfer function for MIMO systems lies within a specific, index-dependent region
in the complex plane, termed the positive damping region. In particular, passivization can be checked from the Nyquist plot in SISO cases and from the Rayleigh quotient of the transfer function in MIMO cases. This viewpoint converts abstract passivity conditions into a visual tool for analyzing the feasibility of passivization, the associated frequency bands, the maximum achievable passivity index, and the trade-off among them. The framework can also be integrated with classical analysis and synthesis tools, including Nyquist and Nichols plots, as well as generalized KYP-based design. Applications of the proposed framework to passivity-based stability analysis and converter controller design in power systems illustrate its practical assessment and tuning capabilities.
\end{abstract}

\begin{IEEEkeywords}
passivity index, output-feedback passivization, geometric interpretation, bandwidth contraction
\end{IEEEkeywords}

\section{Introduction}

Passivity theory is one of the cornerstones of stability analysis for dynamical systems \cite{Hill_Dissipativity_2022}. %  % ref
% Its fundamental property is that the feedback interconnection of multiple passive systems remains passive, thereby guaranteeing the stability of the closed-loop system. 
Owing to its modular structure, it is particularly valuable for analyzing large-scale interconnected systems, such as electrical power systems \cite{Schiffer_Conditions_2014, Peng_Compositional_2025}. % ref  mechanical systems \cite{Bhikkaji_Negative_2012}
With the development of output-feedback (OF) and input-feedforward (IF) passivization \cite{Hassan_Nonlinear_1996}, passivity theory has also become a framework for quantifying and reshaping non-passive systems. In this setting, passivity indices characterize both the distance to passivity and a subsystem's contribution to closed-loop stability.

A core condition of passivity is the positive damping property, which usually implies positive contributions to the stability of physical systems, and a larger positive passivity index indicates stronger damping. 
% Moreover, a larger passivity index indicates stronger capabilities to provide positive damping. 
%For instance, it equates to a positive equivalent resistance, which helps to suppress oscillations in electrical circuits \cite{Chen_Unified_2025}.
For instance, the passivity index is positively correlated with the equivalent admittance in power electronic circuits, which indicates better oscillation suppression capabilities \cite{Chen_Unified_2025, Feng_Unified_2025}.
At the same time, it is desirable to broaden the passive bandwidth even if the system cannot be passive at all frequencies.
Non-passivity situations across the entire frequency bands are common in practical power systems, such as the high-frequency bands of power inverters \cite{Huang_Gain_2024, Dey_PassivityBased_2023a}. % ref
Nevertheless, the pursuit of an expanded passive frequency bandwidth and an increased passivity index presents inherent contradictions. An excessively large passivity index results in very narrow, or even vanishing, passive bandwidths. % Conversely, a small passivity index achieves a broader passive frequency band but does not adequately represent the system's actual damping capability.

Existing passivization studies primarily focus on maximizing the passivity index subject to full-frequency-bandwidth passivity constraints. For linear time-invariant (LTI) systems, a standard route is to convert passivity conditions into linear matrix inequalities (LMIs) via the Kalman-Yakubovich-Popov (KYP) lemma \cite{Kottenstette_relationships_2014}. 
Representative extensions account for additional constraints, such as frequency-dependent quadratic conditions \cite{Khong_Feedback_2025}, generalized passivity notions \cite{Yang_Distributed_2020, Bhowmick_LTI_2017}, and nonlinear settings \cite{DeS.Madeira_Necessary_2022}.
Another important direction is data-driven passivization: offline methods rely on iterative excitation and output measurements \cite{Tanemura_Efficient_2019}, whereas online methods use measured data directly \cite{Welikala_Online_2022}.

The methods above are usually expressed in algebraic or optimization-based forms and implicitly target the entire frequency spectrum. In practical interconnected systems, such as power systems, however, full-spectrum passivization may be unachievable through IF or OF structures \cite{Harnefors_PassivityBased_2016}. In such cases, a key engineering question is whether a system can be passivized over the desired frequency bands.
This issue is especially relevant in power electronics, where passivizing controllers are required to suppress oscillations on targeted frequencies \cite{Chen_Extended_2024}. 
An alternative approach is the generalized KYP lemma \cite{Iwasaki_Generalized_2005}, which provides an important tool for restricted-frequency analysis.
However, the resulting conditions of both remain less transparent for rapid assessment and interpretation, and the trade-off between the passivity index and the passivizable bandwidth is not intuitive.

This gap between rigorous theory and practical assessment motivates the present work. We represent passivization conditions geometrically in the complex plane, thereby converting abstract criteria into a directly interpretable graphical tool, which is more engineer-friendly.
This idea is primarily inspired by the circle criterion \cite{Hassan_Nonlinear_1996} and the disk margin theory \cite{Seiler_Introduction_2020}, but focuses on passivization rather than stability or robustness.
 Our main contribution is to provide a geometric characterization
of passivization conditions that enables direct visual analysis and control design. %, which have important practical significance for engineers.

Specifically, we introduce positive damping (PD) conditions to determine whether an LTI system can be passivized for a prescribed passivity index. These conditions define an index-dependent region in the complex plane, called the positive damping region. A passivizable LTI system requires the Nyquist plot in the SISO case, or the Rayleigh quotient of the transfer function in the MIMO case, to lie within this region. This characterization provides a graphical tool for assessing the feasibility of IF and OF passivization, the associated passivizable frequency bands, the maximum achievable passivity index, and the trade-off among them. It also creates a direct interface between passivization analysis and practical design tools. We further demonstrate its use in passivity-based stability analysis and controller design for power systems. 
% We also show that the proposed method can be extended to generalized passivity definitions, including negative imaginariness \cite{Xiong_Negative_2010}.

\textbf{Notation}: 
The real and complex number sets are $\mathbb{R}$ and $\mathbb{C}$.
$\Re(c)$ and $\Im(c)$ represent the real and imaginary parts of a complex number $c$.
The Kronecker product is denoted by $\otimes$.
The Rayleigh quotient of a matrix $M$ is $\rho_M(x) = x^H M x/(x^H x)$ for $x^H x\not=0$.
The numerical region for a matrix $M$ is $\mathcal{W}(M)=\{x^HMx: x\in\mathbb{C}, x^Hx=1\}$.
The normal and Hermite transpose are $M^T$ and $M^H$.
For a real symmetric matrix $M$, $\lambda_{\min}(M)$ denotes its minimum eigenvalue and $M\succ 0$ for its positive definiteness. 
A closed disk centering on $c\in\mathbb{C}$ with a positive radius $r\in\mathbb{R}$ is denoted by $\mathcal{D}(c,r)=\{z\in\mathbb{C}: |z-c|\leq r\}$, its boundary circle by $\partial \mathcal{D}(c,r)$, and its interior by ${\rm int}\mathcal{D}(c,r)=\mathcal{D}(c,r)\setminus\partial \mathcal{D}(c,r)$. %and $\mathcal{OD}(c,r)=\{z\in\mathbb{C}: |z-c|< r\}$, respectively. Their boundary circle is $\partial \mathcal{D}(c,r)$.
The positive damping region and frequency bands w.r.t. a passivity index $\sigma$ defined in this paper are denoted by $\mathcal{P}_{\rm PD}(\sigma)$ and $\Omega_{\rm PD}(\sigma)$.

\textbf{Code}: All results in this paper can be reproduced by the code at \url{https://github.com/lingo01/Geometric_Passivization}.

\section{Preliminaries}
\begin{definition}[Passivity \cite{Hassan_Nonlinear_1996}]\label{def: passivity}
    Consider an LTI system with a minimal realization $\dot x=Ax+Bu, y=Cx+Du$.
    It is passive if its transfer function $H(s)=C(sI-A)^{-1}B+D$ satisfies the following conditions:
    \begin{enumerate}
        \item Poles of all elements of $H(s)$ are in the closed left-half plane.
        \item  For all $\omega\in\mathbb R$ for which $j\omega$ is not a pole of any element of $H(s)$, the matrix satisfies
        \begin{equation}\label{equ: PR condition 2}
            H(j\omega)+H^H(j\omega)\succeq 0
        \end{equation}
        \item  Any pure imaginary pole $j\omega$ of any element of $H(s)$ is a simple pole and the residue satisfies $\lim_{s\rightarrow j\omega}(s-j\omega)H(s)\succeq 0$.
    \end{enumerate}
    Besides, if $H(s-\varepsilon)$ is passive for a constant $\varepsilon>0$, it is called strictly passive.
\end{definition}

Among all three conditions in Def.~\ref{def: passivity}, the second one is of most significance.
For SISO systems, it reduces to $H(j\omega)+H^H(j\omega)=2\Re(H(j\omega))\geq 0$, i.e., the real part of its Nyquist plot stays positive, which usually implies positive damping in practice.
% For example, consider a power system model $H(s)=1/(ls+r)$, where $l$ and $r$ denote the equivalent inductance and resistance, respectively. Then, the condition \eqref{equ: PR condition 2}$=2r/[(l\omega)^2+r^2]$ holds if and only if the circuit has resistance $r>0$.
To emphasize its significance, we denote it as the \textit{positive damping (PD)} condition.
% It is noted that it is distinguished from the classical \textit{positive real} concept \cite{Hassan_Nonlinear_1996}, which is an equivalent expression of passivity and thus requires other conditions.

\begin{definition}[Positive Damping Condition]
A transfer function matrix $H(s)$ satisfies the PD condition at a frequency $\omega\in\mathbb R$ if $j\omega$ is not a pole of $H$ and satisfies \eqref{equ: PR condition 2}.
% Let $H(s)$ be a transfer matrix satisfying the domain requirements of Def.~\ref{def: passivity}. We say $H$ satisfies the PD condition at $\omega\in\mathbb R$ if $j\omega$ is not a pole of $H$ and satisfies \eqref{equ: PR condition 2}.

\end{definition}

%\begin{remark}[Scope of the PD condition]
%The PD condition is the frequency-domain Hermitian inequality in the second item of Def.~\ref{def: passivity}. It is the key condition used for graphical bandwidth assessment in this paper. Full passivity or full OF/IF passivization additionally requires the pole-location and imaginary-axis residue conditions in Def.~\ref{def: passivity}. Accordingly, whenever only the PD condition is verified over a set of frequencies, the result should be read as a frequency-band PD certificate rather than, by itself, a complete passivity certificate.
%\end{remark}

\begin{figure}[htbp]
    \centering
    \includegraphics[width=3.in]{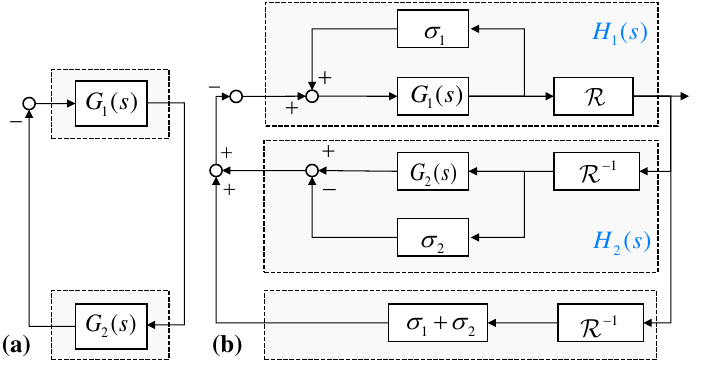}
    \caption{Feedback interconnected system model. (a): original model. (b): equivalent model with IF and OF passivization, where $\mathcal{R}=I$ for classical passivity definition. The operator is for compatibility with the latter discussions of generalized passivity definitions.}
 \label{figure: Passivity_Transformation}
\end{figure}
\fliu{We'd better clearly explain what is $\mathcal R$ in the caption of the Fig.~\ref{figure: Passivity_Transformation}}
\xpeng{Revised}

% The feedback interconnection of two passive systems remains passive \cite{Hassan_Nonlinear_1996}, which makes it particularly effective when studying systems as shown in Fig.~\ref{figure: Passivity_Transformation}(a).
% Furthermore, 
To broaden its applicability to feedback systems with non-passive subsystems in Fig.~\ref{figure: Passivity_Transformation}(a), the equivalent system in Fig.~\ref{figure: Passivity_Transformation}(b) derived from the loop transformation technique is usually considered. Then, if the OF system $H_1$ and the IF system $H_2$ are both passive and $\sigma_1 + \sigma_2 \geq 0$, the closed-loop system is passive \cite[Chapter 6]{Hassan_Nonlinear_1996}\fliu{When citing a book, we need to give the chapter and the number of the theorem.}\xpeng{Revised}. 
% Note that the original subsystems in Fig.~\ref{figure: Passivity_Transformation}(a) correspond to the special case where $\sigma_1 = \sigma_2 = 0$. 
The formal definitions of IF and OF passivization are given as follows.

\begin{definition}[IF and OF Passivization]
For an LTI system $G(s)$ with a $\sigma\in\mathbb{R}$ as its passivity index, we say
\begin{enumerate}
    \item $G(s)$ is IF passive w.r.t. $\sigma$ if $H_\sigma(s) = G(s)-\sigma$ is passive.
    \item $G(s)$ is OF passive w.r.t. $\sigma$ if $H_\sigma(s) = (I - G(s)\sigma)^{-1}G(s)$ is passive and well-defined, i.e., $I-G(s)\sigma$ invertible.
\end{enumerate}
% \begin{equation}\label{equ: def IFP tf}
%     H(s) = G(s)-\sigma
% \end{equation}
% \begin{equation}\label{equ: def OFP tf}
%         H(s) = (I - G(s)\sigma)^{-1}G(s)
%     \end{equation}

\end{definition}

For frequency-domain analysis, we also use the term OF-PD passive at a frequency $\omega$ to mean that $H_\sigma(j\omega)=(I-\sigma G(j\omega))^{-1}G(j\omega)$ is well-defined and satisfies the PD condition at $\omega$. This frequency-wise concept is weaker than full OF passivity unless the other two conditions in Def.~\ref{def: passivity} are also verified.

The case of IF passivization is relatively simple to analyze by noting the linear correlation between $H$ and $G$. Therefore, this paper investigates OF passivization and extends it to IF passivization.

\section{Visualizing Passivization of MIMO Systems}
\subsection{Positive Damping Condition}

Throughout the paper, we consider transfer functions that satisfy the following condition.
\begin{assumption}\label{assump: transfer function}
    A transfer function is real and strictly proper.
\end{assumption}

{\color{black}
\begin{theorem}[MIMO OF PD Condition]\label{theo: MIMO}
Let $G(s)$ be a square transfer-function matrix satisfying Assum.~\ref{assump: transfer function}. Consider a scalar passivity index $\sigma\in\mathbb R$ and a frequency $\omega\in\mathbb R$ such that $j\omega$ is not a pole of $G$ and $I-\sigma G(j\omega)$ is nonsingular. Define
\begin{equation}
    H_\sigma(j\omega)=(I-\sigma G(j\omega))^{-1}G(j\omega).
\end{equation}
Then the following statements hold.
\begin{enumerate}
    \item $H_\sigma$ satisfies the PD condition \eqref{equ: PR condition 2} at $\omega$ if and only if
    \begin{equation}\label{equ: MIMO LMI expression}
    G(j\omega)+G^H(j\omega)-2\sigma G(j\omega)G^H(j\omega)\succeq0 .
    \end{equation}
    \item If $\sigma\geq0$, a necessary condition for \eqref{equ: MIMO LMI expression} is
    \begin{equation}\label{equ: PR condition MIMO}
    \Re(\rho_{G(j\omega)}(x))\geq \sigma |\rho_{G(j\omega)}(x)|^2,
    \quad \forall x: x^Hx\neq0 .
    \end{equation}
    % This condition is not sufficient for a general non-normal MIMO transfer matrix.
    \item If $G(j\omega)$ is nonsingular, then \eqref{equ: MIMO LMI expression} is equivalent to
    % \begin{equation}\label{equ: MIMO rayleigh equivalent}
    %     G^{-1}(j\omega)+G^{-H}(j\omega)\succeq 2\sigma I,
    % \end{equation}
    % or, equivalently,
    \begin{equation}\label{equ: MIMO rayleigh equivalent}
        \Re(\rho_{G^{-1}(j\omega)}(x))\geq\sigma,
        \quad \forall x: x^Hx\neq0 .
    \end{equation}
    % Or equivalently, it can be stated in the sense of the numerical region
    % \begin{equation}\label{equ: MIMO rayleigh equivalent}
    %     \mathcal W(G^{-1}(j\omega))\subseteq\{z\in\mathbb C:\Re (z)\geq\sigma\}.
    % \end{equation}
    \item If $G^{-1}(j\omega)$ is nonsingular and sectorial \cite{Chen_Phase_2024}, i.e., $\rho_{G^{-1}(j\omega)}(x)\neq0$, then \eqref{equ: MIMO LMI expression} is equivalent to
   \begin{equation}\label{equ: MIMO rayleigh equivalent-inverse}
   \begin{aligned}
   \Re\left((\rho_{G^{-1}(j\omega)}(x))^{-1}\right) \geq \sigma \left|{(\rho_{G^{-1}(j\omega)}(x))^{-1}}\right|^2,\quad \\
       \forall x: x^Hx\neq0 .
   \end{aligned}
   \end{equation}
\end{enumerate}
\end{theorem}

\begin{proof}
For compactness, write $G=G(j\omega)$ and $H=H_\sigma(j\omega)$. Since $I-\sigma G$ is nonsingular,
\begin{equation}
H+H^H=(I-\sigma G)^{-1}G+G^H(I-\sigma G^H)^{-1}.
\end{equation}
Applying the congruence transformation with $I-\sigma G$ gives
\begin{equation}
(I-\sigma G)(H+H^H)(I-\sigma G)^H=G+G^H-2\sigma GG^H.
\end{equation}
The complex congruent transformation preserves positive semi-definiteness, thereby proving the first statement.

For the second statement, assume $\sigma\geq0$ and \eqref{equ: MIMO LMI expression} holds. For any $x\neq0$,
\begin{equation}
    2\Re(x^HGx)\geq2\sigma x^HGG^Hx.
\end{equation}
By the Cauchy-Schwarz inequality,
\begin{equation}
    x^HGG^Hx\geq \frac{|x^HGx|^2}{x^Hx}=|\rho_G(x)|^2x^Hx .
\end{equation}
Because $\sigma\geq0$, multiplying this inequality by $\sigma$ preserves the inequality direction. Hence
\begin{equation}
    \Re(\rho_G(x))\geq\sigma |\rho_G(x)|^2,
\end{equation}
which proves the necessary condition. 

If $G$ is nonsingular, pre- and post-multiplying \eqref{equ: MIMO LMI expression} by $G^{-1}$ and $G^{-H}$ gives $G^{-1}+G^{-H}-2\sigma I\succeq0$, which is equivalent to the Rayleigh-quotient inequality \eqref{equ: MIMO rayleigh equivalent}.

Finally, the fourth statement follows from combining \eqref{equ: MIMO rayleigh equivalent} and the following identity
\begin{equation}
    |\rho_{G^{-1}}(x)|^2\Re\left((\rho_{G^{-1}}(x))^{-1}\right)=\Re(\rho_{G^{-1}}(x)).
\end{equation}
\end{proof}
}

{\color{black}
The Rayleigh-quotient screening condition in Thm.~\ref{theo: MIMO} is closely associated with the numerical region \cite{Chen_Phase_2024}. 
For example, the condition \eqref{equ: MIMO rayleigh equivalent} in the theorem can be rewritten as $\mathcal W(G^{-1}(j\omega))\subseteq\{z\in\mathbb C:\Re (z)\geq\sigma\}$.
Moreover, it leads to the following gain-phase diagnostic.}
{\color{black}
\begin{corollary}[Gain-Phase MIMO OF-PD Screening]\label{corollary: MIMO gain-phase}
Let $G(s)$ be a transfer-function matrix satisfying Assum.~\ref{assump: transfer function}. Suppose $\sigma\geq0$ and $\rho_{G(j\omega)}(x)\neq0$ for the Rayleigh quotients under consideration. A necessary condition for the MIMO OF-PD condition at $\omega$ is
    \begin{equation}
        \sigma\leq \inf_{z\in\mathcal{W}(G(j\omega)),\, z\neq0}\frac{\cos(\angle z)}{|z|}.
    \end{equation}
\end{corollary}
\begin{proof}
The result follows by writing \eqref{equ: PR condition MIMO} as $\sigma\leq \Re (z)/|z|^2=\cos(\angle z)/|z|$ for every nonzero Rayleigh quotient $z$.
\end{proof}
This result establishes a close relation between gain, phase, and passivization capability.
Besides large gains, stronger cross-channel couplings may enlarge the phase spread of the numerical region, thereby reducing $\cos(\angle z)$ and rapidly degrading the passivization capability.
It explains why a large matrix phase defined as $\max_{z\in\mathcal{W}(G(j\omega))}\arg(z)$ in power converters leads to coupled instability \cite{Huang_Gain_2024, Feng_Unified_2025}.
Moreover, the third result of Thm.~\ref{theo: MIMO} provides a clear geometry, i.e., the numerical region of $G^{-1}(j\omega)$ should stay on the right of $\sigma$, indicating this fact from an alternative perspective.}

\subsection{Visualization of PD Region and Its Property}
{\color{black}
The exact MIMO condition \eqref{equ: MIMO LMI expression} is necessary and sufficient, and the Rayleigh quotient condition \eqref{equ: MIMO rayleigh equivalent} provides an exact half-plane geometry when $G(j\omega)$ is nonsingular. The Rayleigh condition \eqref{equ: PR condition MIMO} of $G(j\omega)$, in contrast, gives a quick check.
Specifically, \eqref{equ: MIMO rayleigh equivalent} defines the output-feedback positive-damping region
\begin{equation}\label{equ: comp 4}
\mathcal P_{\rm PD}(\sigma)=
\begin{cases}
\mathcal D\left(\dfrac{1}{2\sigma}+j0,\left|\dfrac{1}{2\sigma}\right|\right), & \sigma>0,\\[1ex]
\{z\in\mathbb C:\Re (z)\geq0\}, & \sigma=0,\\[1ex]
\mathbb C\setminus{\rm int}\,\mathcal D\left(\dfrac{1}{2\sigma}+j0,\left|\dfrac{1}{2\sigma}\right|\right), & \sigma<0.
\end{cases}
\end{equation}
For $\sigma>0$, this region is the closed disk centered at $(1/(2\sigma),j0)$ with radius $1/(2\sigma)$. For $\sigma<0$, it is the exterior of the corresponding disk with the nonnegative radius $|1/(2\sigma)|$. Inclusion of all Rayleigh quotients of $G(j\omega)$ in this disk is necessary but does not by itself guarantee \eqref{equ: MIMO LMI expression}.}

Then, the frequency bands that fulfill the PD condition, referred to as the \emph{PD frequency band}, can be estimated from this interpretation. Furthermore, it provides a geometric explanation for why increasing the OF passivity index yields a narrower PD bandwidth and a more stringent passivity condition, as summarized in the subsequent corollary.

{\color{black}
\begin{definition}[OF-PD Frequency Band]
Consider a transfer-function matrix $G(s)$ satisfying Assum.~\ref{assump: transfer function} and a scalar index $\sigma$. Define $H_\sigma(s)=(I-\sigma G(s))^{-1}G(s)$ wherever this transfer matrix is well-defined. The output-feedback positive-damping frequency band w.r.t. $\sigma$ is
\begin{equation}
\begin{aligned}
\Omega_{\rm PD}(\sigma)=\{\omega\geq0:
& j\omega\text{ is not a pole of }G,\\ 
& I-\sigma G(j\omega)\text{ is nonsingular}, \\
&H_\sigma(j\omega)+H_\sigma^H(j\omega)\succeq0\}.
\end{aligned}
\end{equation}
For $\sigma\geq0$, define the Rayleigh-screened band
\begin{equation}
\tilde\Omega_{\rm PD}(\sigma)=\{\omega\geq0:\eqref{equ: PR condition MIMO}\text{ holds for all }x\neq0\}.
\end{equation}
In general MIMO cases, $\tilde\Omega_{\rm PD}(\sigma)$ is only a necessary screening band.
\end{definition}

\begin{corollary}[OF-PD Bandwidth Contraction]\label{corollary: contraction}
Consider a transfer-function matrix $G(s)$ satisfying Assum.~\ref{assump: transfer function}. For any real indices $\sigma_2>\sigma_1$, the exact MIMO OF-PD condition is monotone in the following sense: at every frequency where both $I-\sigma_1G(j\omega)$ and $I-\sigma_2G(j\omega)$ are nonsingular,
\begin{equation}
\omega\in\Omega_{\rm PD}(\sigma_2)\quad\Longrightarrow\quad \omega\in\Omega_{\rm PD}(\sigma_1).
\end{equation}
Moreover, for $\sigma_2>\sigma_1\geq0$, the Rayleigh-screened bands satisfy
\begin{equation}
\tilde\Omega_{\rm PD}(\sigma_2)\subseteq\tilde\Omega_{\rm PD}(\sigma_1).
\end{equation}
\end{corollary}

\begin{proof}
Let $A_\sigma(j\omega)=G(j\omega)+G^H(j\omega)-2\sigma G(j\omega)G^H(j\omega)$.
If $\sigma_2>\sigma_1$, then
\begin{equation}
A_{\sigma_1}(j\omega)=A_{\sigma_2}(j\omega)+2(\sigma_2-\sigma_1)G(j\omega)G^H(j\omega)\succeq A_{\sigma_2}(j\omega).
\end{equation}
Thus $A_{\sigma_2}\succeq0$ implies $A_{\sigma_1}\succeq0$, proving the exact-band contraction on the common regularity domain. The screened-band inclusion follows directly from $\Re\rho_G\geq\sigma_2|\rho_G|^2$ implying $\Re\rho_G\geq\sigma_1|\rho_G|^2$ when $\sigma_2>\sigma_1\geq0$.
\end{proof}}

\subsection{Dual Results for Input-Feedforward Passivization}

The proposed method also applies to visualizing the IF passivization $H = G - \sigma I$. Due to the parallel structure of both $G$ and $\sigma$, it is more trivial than the OF counterparts, where the PD condition can be equivalently written as $G+G^H-2\sigma I \succeq 0$.
Similarly, if employing the Rayleigh quotient, we can estimate the IF PD region as $\{\rho_G(x): \Re(\rho_G(x)) \geq \sigma, \forall x: x^Hx\not=0\}$.
Following the ideas of OF passivization, we can visualize the PD region as a vertical half-plane and derive the property of PD frequency-band contraction.

% \begin{remark} Consider a sectorial $G^{-1}$ and a passivity index $\sigma\in\mathbb{R}$. Combining the IF passivization results with \eqref{equ: MIMO rayleigh equivalent} in Thm.~\ref{theo: MIMO}, an interesting fact is that $G$ is OF passive w.r.t. $\sigma$  if and only if $G^{-1}$ is IF passive w.r.t. $\sigma$. \end{remark}

\section{Visualizing Passivization of SISO Systems}
\fliu{Intuitively, the SISO case should be a special case of MIMO? So why do we need to present this section specifically? If not, why don't we place the SISO case before the MIMO one?}\xpeng{There are more elegant results for SISO systems, which are hardly extended to MIMO ones.}

Passivity theory decomposes networked systems into devices and their interconnections. In many engineering settings, device models are scalar either directly or after suitable transformations \cite{Zhou_SmallSignal_2023}. 
Therefore, the SISO case plays a particularly important role in bridging theoretical passivization conditions and practical frequency-domain assessment.

\subsection{Positive Damping Condition}

{\color{black}
\begin{theorem}[SISO OF-PD Condition]\label{theo: SISO}
Let $G(s)$ be a scalar transfer function satisfying Assum.~\ref{assump: transfer function}. For an index $\sigma\in\mathbb R$ and a frequency $\omega\in\mathbb R$ where $j\omega$ is not a pole of $G$ and $1-\sigma G(j\omega)\neq0$, the transformed system $H_\sigma(s)=G(s)/(1-\sigma G(s))$ satisfies the PD condition \eqref{equ: PR condition 2} at $\omega$ if and only if
\begin{equation}\label{equ: PR condition SISO}
\Re(G(j\omega)) \geq \sigma |G(j\omega)|^2 .
\end{equation}
\end{theorem}
\begin{proof}
Substituting $H_\sigma(j\omega)=G(j\omega)/(1-\sigma G(j\omega))$ gives
\begin{equation}
\Re(H_\sigma(j\omega))=\frac{\Re(G(j\omega))-\sigma|G(j\omega)|^2}{|1-\sigma G(j\omega)|^2}.
\end{equation}
The denominator is strictly positive under the well-posedness assumption $1-\sigma G(j\omega)\neq0$. Since $H_\sigma+H_\sigma^H=2\Re(H_\sigma)$ in the SISO case, the PD condition is equivalent to \eqref{equ: PR condition SISO}.
\end{proof}}

\subsection{Visualization of PD Region and Its Property}

For SISO systems, the OF-PD region satisfying \eqref{equ: PR condition SISO} is exactly $G(j\omega)\in\mathcal P_{\rm PD}(\sigma)$ with $\mathcal P_{\rm PD}(\sigma)$ defined in \eqref{equ: comp 4}. Hence, for $\sigma>0$, the Nyquist plot of $G(j\omega)$ must lie inside the disk centered at $(1/(2\sigma),j0)$ with radius $1/(2\sigma)$; for $\sigma<0$, it must lie outside the disk with the same center and radius $|1/(2\sigma)|$; and for $\sigma=0$, it must lie in the closed right-half plane.

At the level of the PD condition, the geometry of OF passivization can be alternatively understood from the M{\"o}bius transformation $H_\sigma=G/(1-\sigma G)$. The right-half plane in the $H_\sigma$-plane is mapped back to the disk/exterior/half-plane region in the $G$-plane. Full OF passivity still requires the pole-location and residue conditions in Def.~\ref{def: passivity}; these are addressed by the sufficient geometric certificate below. Similarly, IF passivization can be visualized since it also implies the affine transformation $H=G-\sigma$.

\begin{remark}
This M{\"o}bius perspective suggests the proposed method also holds for discrete systems derived from the bilinear method. Consider Tustin's mapping $z=(1+sT/2)/(1-sT/2)$, which is also a M{\"o}bius transformation.
Thus, the circle-preservation property guarantees that the PD region will be preserved as a disk in the $z$-domain.
\end{remark}

Similarly to MIMO systems, we can also derive the PD bandwidth contraction property for SISO systems.
Due to the necessity and sufficiency of Thm.~\ref{theo: SISO}, the result is stronger as shown below:

{\color{black}
\begin{corollary}[SISO OF-PD Bandwidth Contraction]
Consider a scalar transfer function $G(s)$ satisfying Assum.~\ref{assump: transfer function}. Then the Rayleigh-screened and exact OF-PD bands coincide, $\tilde\Omega_{\rm PD}(\sigma)=\Omega_{\rm PD}(\sigma)$, whenever both are defined. Moreover, for any indices $\sigma_2>\sigma_1$, the inclusion
\begin{equation}
\Omega_{\rm PD}(\sigma_2)\subseteq\Omega_{\rm PD}(\sigma_1)
\end{equation}
holds on the common regularity domain where both $1-\sigma_iG(j\omega)\neq0$, $i=1,2$.
\end{corollary}}

{\color{black}
\begin{proof}
The equality of the screened and exact bands follows from the necessary and sufficient scalar condition in Thm.~\ref{theo: SISO}. For a fixed frequency, if $\Re( G(j\omega))\geq\sigma_2|G(j\omega)|^2$ and $\sigma_2>\sigma_1$, then $\Re( G(j\omega))\geq\sigma_1|G(j\omega)|^2$, which completes the proof.
\end{proof}}

The results are intuitive from the graphical analysis shown in Fig.~\ref{figure: Illu_1}: compared to classical passivity ($\sigma=0$), the OF passivization w.r.t. $\sigma>0$ is more stringent. In contrast, $\sigma<0$ allows certain systems that were previously non-passive to be rendered OF passive.

\begin{figure}[htbp]
    \centering
    \includegraphics[width=3.in]{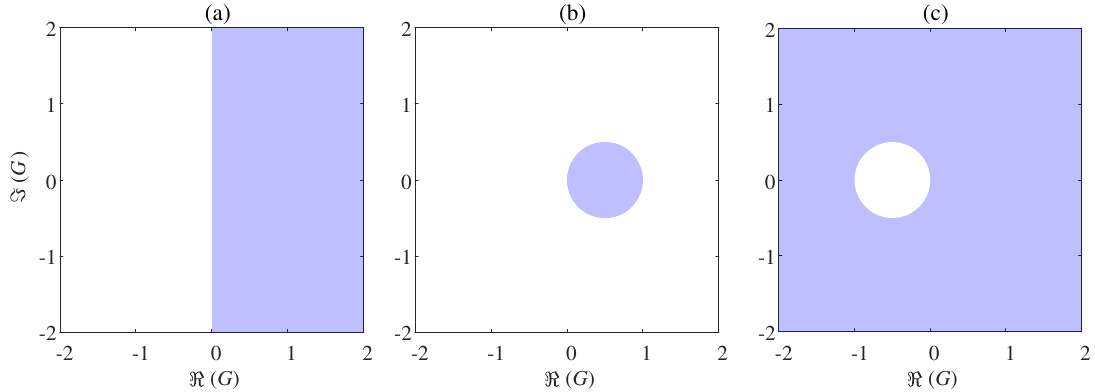}
    \caption{Illustration of PD regions $\mathcal{P}_{\rm PD}(\sigma)$. (a): $\sigma=$0. % (b): $\sigma=$0.1 (gray), 0.3 (blue), and 1.0 (violet). (c): $\sigma=$-0.1 (gray), -0.3 (blue), and -1.0 (violet).
    (b): $\sigma=$1. (c): $\sigma=$-1. 
}
 \label{figure: Illu_1}
\end{figure}
%\fliu{So there can exist a maximal $\sigma$? We could probably draw it on the graphs to show the upper bounds of passivization? Moreover, do we need to enforce the constraint $\sigma >0$, since $\sigma <0$ means the system is not passive?}
%\xpeng{The maximal $\sigma$ depends not only on $\sigma$ but also the system (Nyquist curve). This has been analyzed in Section Application. $\sigma<0$ is still OF passive (but with weaker capability compared to direct passivity) and is distinct from non-passivity.}
% \subsection{Geometric Passivization Condition}

Inspired by the above interpretation, we can derive a geometric condition of OF passivization as follows.

{\color{black}
\begin{theorem}[Geometric OF Passivity Certificate]\label{theo: SISO passivity condition}
Let $G(s)$ be a scalar rational transfer function satisfying Assum.~\ref{assump: transfer function} with no poles on the imaginary axis, and let $\sigma\neq0$. Define $F(s)=1-\sigma G(s)$ and $H_\sigma(s)=G(s)/F(s)$. Then $G(s)$ is OF passive w.r.t. $\sigma$ if the following conditions hold:
\begin{enumerate}
    \item Under the Nyquist convention used here, the counter-clockwise winding number made by the Nyquist plot of $G(s)$ around the point $(1/\sigma,j0)$ equals the number of unstable poles of $G(s)$.
    \item For every $\omega$ for which $G(j\omega)$ is finite and $1-\sigma G(j\omega)\neq0$, the Nyquist plot satisfies $G(j\omega)\in\mathcal P_{\rm PD}(\sigma)$.
    \item If the Nyquist plot intersects $(1/\sigma,j0)$ for each positive crossing frequency $\omega_c>0$, the intersection is simple and vertical from the upper to the lower half plane, i.e.,
        \begin{equation}\label{equ: G intersect condition}
            \frac{{\rm d}}{{\rm d}\omega}\Re (G(j\omega_c))=0,\quad
            \frac{{\rm d}}{{\rm d}\omega}\Im (G(j\omega_c))<0,
        \end{equation}
        and the root $s=j\omega_c$ of $F(s)=0$ is simple.
\end{enumerate}
\end{theorem}

\begin{proof}
The poles of $H_\sigma$ are the poles of $G$ not canceled by $F$, together with the zeros of $F(s)=1-\sigma G(s)$. Since $G$ has no imaginary-axis poles, the Nyquist/argument-principle condition in item 1) gives that $F$ has the required number of zeros so that no right-half-plane pole is introduced into $H_\sigma$. Item 2) and Thm.~\ref{theo: SISO} give the PD condition in Def.~\ref{def: passivity} wherever $H_\sigma(j\omega)$ is finite.

It remains to check imaginary-axis poles generated by intersections with $G(j\omega)=1/\sigma$. If no such intersection exists, then $H_\sigma(s)=G(s)/(1-\sigma G(s))$ has no imaginary-axis pole. If $F(j\omega_c)=0$ and the root is simple, the residue is
\begin{equation}
\begin{aligned}
{\rm Res}(H_\sigma,j\omega_c)&=\lim_{s\to j\omega_c}\frac{(s-j\omega_c)G(s)}{1-\sigma G(s)}\\
&=-\frac{1}{\sigma^2G'(j\omega_c)}
=\frac{-j}{\sigma^2\frac{{\rm d}G(j\omega_c)}{{\rm d}\omega}} .
\end{aligned}
\end{equation}
Writing $G(j\omega)=u(\omega)+jv(\omega)$ gives
\begin{equation}
{\rm Res}(H_\sigma,j\omega_c)=
\frac{-v'(\omega_c)-ju'(\omega_c)}{\sigma^2|G'(j\omega_c)|^2}.
\end{equation}
The residue condition in Def.~\ref{def: passivity} is therefore satisfied when $u'(\omega_c)=0$ and $v'(\omega_c)<0$, which is the condition 3). Hence, the three passivity conditions in Def.~\ref{def: passivity} hold.
\end{proof}}

\subsection{Robustness of Passivization}
The robustness of passivization can be assessed by translating passivization margins into intuitive geometric distances and regions. To demonstrate its strengths, undirected and polytopic uncertainties are considered.

\textbf{1) Undirected Uncertainty}:

Consider a nominal system $G_0(s)$ with an undirected and additive perturbation $\Delta(s)$, i.e., $G=G_0+\Delta$. The robustness of its OF-PD condition is investigated w.r.t. an index $\sigma>0$. The analysis relies on the minimum distance of Nyquist plots to the boundary of PD regions defined as
\begin{equation}\label{equ: robustness distance}
    d(\sigma, \omega)= {1}/{(2\sigma)} - \left| G_0(j\omega) - {1}/{(2\sigma)} \right| .
\end{equation}
For a nominal system whose Nyquist plot lies strictly inside $\mathcal P_{\rm PD}(\sigma)$, $d(\sigma,\omega)>0$ on the considered frequency set.
\begin{corollary}[Undirected Robust OF-PD Condition]\label{theo: robustness}
Let $\sigma>0$ and let $G_0(s)$ satisfy the OF-PD condition on a frequency set $\Omega$ with distance $d(\sigma,\omega)>0$. Consider $G(s)=G_0(s)+\Delta(s)$. If $|\Delta(j\omega)|<d(\sigma,\omega)$ for all $\omega\in\Omega$ and $1-\sigma G(j\omega)\neq0$ on $\Omega$, then $G(s)$ also satisfies the OF-PD condition on $\Omega$. If, in addition, the pole-location and residue requirements in Def.~\ref{def: passivity} are verified by the Nyquist certificate in Thm.~\ref{theo: SISO passivity condition}, then the corresponding full OF passivity conclusion follows.
\end{corollary}
\begin{proof}
Denote $\mathcal{P}_{\rm PD}(\sigma)=\mathcal{D}(c,r)$ with $c=r=1/(2\sigma)$. The triangle inequality gives
\begin{equation}
|G(j\omega)-c|\leq |G_0(j\omega)-c|+|\Delta(j\omega)|<r.
\end{equation}
Thus $G(j\omega)\in\mathcal D(c,r)$, and Thm.~\ref{theo: SISO} gives the OF-PD condition on $\Omega$. Full passivity, when claimed, requires the additional pole and residue checks stated in the corollary.
\end{proof}
% The robustness condition $\delta<d_{\rm{min}}$ has an intuitive geometric meaning: the maximum perturbation must be smaller than the minimum distance from the nominal Nyquist plot to the boundary of the PD region. This is a non-conservative condition if the perturbation is not subject to other constraints.
% In the sense that if $\delta>d_{\rm{min}}$, there exists a perturbation of that magnitude that could push the system outside the PD region at its most vulnerable frequency.

\textbf{2) Polytopic Uncertainty}:

In industrial applications, state matrices are influenced by parameter uncertainties, such as inaccurate resistance estimation and time-varying grid strength in power systems.
Then, the perturbed system can be described by an affine combination of different nominal systems.
By using the proposed method, the passivization robustness under polytopic uncertainty can be transformed into an LMI problem.
\begin{corollary}[Polytopic Robust PD Inclusion]
Let $G(s)$ be a system with minimal realization $(A,B,C,D)$. Suppose it has polytopic uncertainty written as
\begin{equation}\label{equ: affine ABCD}
    (A,B,C,D)= \sum_{i=1}^n\left( \alpha_iA_i, \alpha_iB_i,  \alpha_iC_i, \alpha_iD_i\right)
\end{equation}
where the affine parameters $\alpha_i\geq0$, $\sum_i\alpha_i=1$.
Then, $G(s)$ is OF passive w.r.t. the index $\sigma$ if there exist $P_i\succ 0$ for each $i=1,2,\cdots, n$ and a common matrix $F$ independent from $\alpha_i$ such that
\begin{equation}\label{equ: LMI vertex}
    \Theta_i + F N_i + N_i^T F^T \prec 0,\; i=1,2,\cdots,n
\end{equation}
where 
\begin{equation}
    \Theta_i=\begin{bmatrix}
        0 & P_i &  & \\
        P_i & 0 &  & \\
         & &  I &-\frac{I}{2\sigma}\\
         & & -\frac{I}{2\sigma} & 0
    \end{bmatrix},
    N_i=
        \begin{bmatrix} A_i & -I & 0 & B_i \\ C_i & 0 & -I & D_i \end{bmatrix}
\end{equation}
\end{corollary}

\begin{proof}
    First, the PD disk inclusion condition $G(j\omega)\in\mathcal{D}(1/(2\sigma),1/(2\sigma))$ can be equivalently expressed as
    \begin{equation}\label{equ: passivity LMI}
        \begin{bmatrix}G(j\omega)\\ I\end{bmatrix}^H\Pi
        \begin{bmatrix}G(j\omega)\\ I\end{bmatrix}\leq 0,\,
    % \end{equation} where
    %\begin{equation}\label{equ: Pi element-wise expr}
        \Pi=\begin{bmatrix}
            \Pi_{11} & \Pi_{12}\\
            \Pi_{21} & \Pi_{22}\end{bmatrix}=\begin{bmatrix}
            I & -\frac{I}{2\sigma}\\
            -\frac{I}{2\sigma} & 0
        \end{bmatrix}
    \end{equation}
    The LMI \eqref{equ: passivity LMI} has the equivalent expression written as
    \begin{equation}
    %\small
        \begin{bmatrix} (j\omega I - A)^{-1}B \\ I \end{bmatrix}^H \begin{bmatrix} C & D \\ 0 & I \end{bmatrix}^T \Pi \begin{bmatrix} C & D \\ 0 & I \end{bmatrix}\begin{bmatrix} (j\omega I - A)^{-1}B \\ I \end{bmatrix} \preceq 0
    \end{equation}

    Under the standard strict KYP assumptions for the considered realization, the frequency-domain inequality is certified by the existence of a matrix $P\succ 0$ such that
    \begin{equation}
        \Gamma^T \Theta \Gamma\prec0,\,
        \Theta=\begin{bmatrix} 0 & P &  &  \\ P & 0 &  &  \\  &  & I & -\frac{I}{2\sigma} \\  &  & -\frac{I}{2\sigma} & 0 \end{bmatrix},\,
        \Gamma=\begin{bmatrix} I & 0 \\ A & B \\ C & D \\ 0 & I \end{bmatrix}
    \end{equation}
    In other words, $\eta^T \Gamma^T \Theta \Gamma \eta < 0$, $\forall\eta\not=0$. Let $\eta=[x^T, u^T]^T$, then the vector $\Gamma \eta$ can be computed as
    \begin{equation}
        \Gamma \eta=\begin{bmatrix} I & 0 \\ A & B \\ C & D \\ 0 & I \end{bmatrix}\begin{bmatrix}
            x\\u
        \end{bmatrix} = \begin{bmatrix} x \\ Ax+Bu \\ Cx+Du \\ u \end{bmatrix} = \begin{bmatrix} x \\ \dot{x} \\ y \\ u \end{bmatrix}
    \end{equation}
    Define $\zeta=\Gamma\eta=[x^T,\, \dot x^T,\, y^T,\, u^T]^T$.
    Then, the state expressions $\dot x=Ax+Bu.\, y=Cx+Du$ can be reformulated as $N\zeta=0$, where
    \begin{equation}
        N=\begin{bmatrix} A & -I & 0 & B \\ C & 0 & -I & D \end{bmatrix}
    \end{equation}
    Then, the original requirement $\eta^T \Gamma^T \Theta \Gamma \eta < 0, \forall \eta \ne 0$ can be transformed into $\zeta^T \Theta \zeta < 0, \forall \zeta \ne 0$ subject to $N\zeta = 0$. Then, by employing the Finsler's Lemma \cite{de2007stability}, it is equivalent to find a matrix $F$ such that
    \begin{equation}
        \Theta + F N+ N^T F^T \prec 0
    \end{equation}
    Let $P=\sum_i \alpha_iP_i$ and $N=\sum_i \alpha_iN_i$. Substituting them and \eqref{equ: affine ABCD} into the inequality yields
    \begin{equation}
        \sum_{i=1}^n \alpha_i\left(\Theta_i + F N_i + N_i^T F^T\right)\prec0,\, \sum_{i=1}^n\alpha_i=1
    \end{equation}
   Equivalently, it requires the inequality to hold on each vertex \eqref{equ: LMI vertex}, which completes the proof.
\end{proof}
% In this result, a common $F$ is required for all vertices $(A_i,B_i,C_i,D_i)$, $i=1,2,\cdots,n$. Under polytopic uncertainties, the system's frequency response might form a non-convex region. Hence, merely regulating the Nyquist plot of each vertex within the PD region is insufficient, as non-convex edges may bulge out. Topologically, the slack variable $F$ constructs a ``virtual convex envelope" that strictly wraps this set. By satisfying the LMI at all vertices, we guarantee this entire envelope is confined within $\mathcal{P}_{PD}(\sigma)$. Thus, the LMI acts as an algebraic certificate for the geometric inclusion: $\text{Envelope}(G(j\omega, \alpha)) \subset \mathcal{P}_{PD}(\sigma)$.

\subsection{Waterbed Effect of Passivization}

In SISO systems, Thm.~\ref{theo: SISO} suggests that the following $\varsigma(\omega)$ serves as a metric of OF passivizable ability at the frequency $\omega$ where $G(j\omega)\not=0$:
\begin{equation}\label{equ: comp 8}
    {\varsigma}(\omega) = {\Re(G(j\omega))}/{|G(j\omega)|^2} = \Re\left({G^{-1}(j\omega)}\right)
\end{equation}
% The second equality can be derived through direct computation.
A larger index $\varsigma(\omega)$ quantifies the system's stronger ability to be OF passivated and thus indicates a better damping effect at the frequency $\omega$.
It can be viewed as a generalized and frequency-wise OF passivity index and satisfies $\sigma\leq\varsigma(\omega)$ as delineated in \eqref{equ: PR condition SISO}.
However, we show $\varsigma(\omega)$ cannot be strengthened at all frequencies for a specific system but follows a conservation law, akin to the \textit{waterbed effect} \cite{Seron_Fundamental_1997}. % Chen_Sensitivity_1995, Gomez_Integral_1996 % ref

Specifically, we consider a strictly proper rational, open-loop stable, and minimum-phase LTI system with no zeros on the imaginary axis $G(s)$. Then, it can be reformulated as ${1}/{G(s)}=L(s)+C+R(s)$, where $L(s)$, $C$, and $R(s)$ are the polynomials without the constant term, constant terms, and strict rational functions, respectively.
The order of $L(s)$ equals the relative degree $n_r$ of $G(s)$.
Consider the rational part denoted by $f(s)$, written as $f(s) = {1}/{G(s)}-L(s)=C+R(s)$.
% \begin{equation}\label{equ: def f(s)}
%     f(s) = {1}/{G(s)}-L(s)=C+R(s)
% \end{equation}
The minimum-phase assumption guarantees that $f(s)$ is analytic on the closed right-half plane and $\lim_{D\to\infty}\sup_\theta |f(De^{j\theta})|/D=0$. Therefore, $\Re(f(s))$ is harmonic and follows the Poisson formula \cite{Seron_Fundamental_1997}
\begin{equation}
    \pi\Re(f(a)) = \int_{-\infty}^{\infty} \Re(f(j\omega))\mathscr{P}_a(\omega){\rm d}\omega
\end{equation}
{\color{black}where $a\in\mathbb R^+$ and $\mathscr{P}_a(\omega)=a/(\omega^2+a^2)$ is the Poisson kernel.} 
The constant $a$ regulates the shapes of the kernel, and a smaller $a$ indicates more concern for the low-frequency bands.
% \footnote{Alternatively, $a$ can be chosen as the poles of $G(s)$ to specifically consider their decided frequency bands, just like the Poisson-type Bode's sensitivity integrals \cite[Chapter 3]{Seron_Fundamental_1997}. For these applications, $a\in\mathbb C$ and the corresponding complex version of the Poisson kernel $\mathscr{P}_a(\omega)=\Re(a)/[\Re(a)^2+(\omega-\Im(a))^2]$ should be employed.}
For a given system, $a$ should be predetermined as a constant.
Substituting $f(s)$ yields
\begin{equation}\label{equ: comp 10}
    \frac{1}{G(a)}-L(a)=\frac{1}{\pi}\int_{-\infty}^{\infty} \left[\varsigma(\omega)-\Re(L(j\omega))\right]\mathscr{P}_a(\omega){\rm d}\omega
\end{equation}
 
For high relative degrees $n_r>1$, one obtains additional polynomial subtraction terms in the right-hand side of \eqref{equ: comp 10}, but the qualitative conservation persists.
% For these scenarios, the equality \eqref{equ: comp 10} holds in the sense of the Hadamard finite part integral, i.e., $\int_{-\infty}^\infty [A(\omega)-B(\omega)]{\rm d}\omega\overset{\rm def}{=}{\rm P.V.}\int_{-\infty}^\infty [A(\omega)-B(\omega)]{\rm d}\omega=\lim_{l\to\infty}\int_{-l}^l[A(\omega)-B(\omega)]{\rm d}\omega$.
For $n_r=1$, we can write $L(s)=l_1s$ where $l_1=1/(\lim_{s\to\infty}sG(s))$ is a real coefficient and \eqref{equ: comp 10} reduces to
\begin{equation}\label{equ: comp 11}
    \frac{1}{G(a)}-l_1a = \frac{1}{\pi}\int_{-\infty}^{\infty} \varsigma(\omega)\mathscr{P}_a(\omega){\rm d}\omega
\end{equation}
which implies that the (Poisson-weighted) integral of $\varsigma(\omega)$ over the entire frequency bands is conservative for a given system $G(s)$.
Therefore, increased damping in some bands (larger $\varsigma$) will force reductions elsewhere.
For instance, suppose we hope $\varsigma(\omega)\geq \sigma$ for $|\omega|\leq \omega_c$ and $\varsigma(\omega)\geq 0$ for $|\omega|> \omega_c$. Then, \eqref{equ: comp 11} with $a>0$ yields
\begin{equation}
        \frac{1}{G(a)}-l_1a \geq \frac{1}{\pi}\int_{-\omega_c}^{\omega_c} \varsigma(\omega)\mathscr{P}_a(\omega){\rm d}\omega\geq \frac{\sigma}{\pi}\int_{-\omega_c}^{\omega_c} \mathscr{P}_a(\omega){\rm d}\omega\\
\end{equation}
Direct computation leads to ${1}/{G(a)}-l_1a\geq{\sigma}/{\pi}\arctan\left({\omega_c}/{a}\right)$.
This inequality clearly shows that the passivizable frequency bandwidth $\omega_c$ and the passivity index $\sigma$ cannot be simultaneously extended for a predetermined transfer function $G(s)$.

% This waterbed effect can be extended to MIMO systems. Employing Thm.~\ref{theo: MIMO}, we can define the frequency-wise passivity matrix index $\varsigma(\omega)$ as
% \begin{equation}
%     G(j\omega) + G^H(j\omega) = 2 G(j\omega) \varsigma(\omega) G^H(j\omega)
% \end{equation}
% Supposing $G(j\omega)$ is not singular, it can be equivalently transformed into
% \begin{equation}
%     \varsigma(\omega) = \frac{1}{2}\left( G^{-1}(j\omega) + G^{-H}(j\omega) \right) = \Re (G^{-1}(j\omega))
% \end{equation}
% Define the $f(s)$ in the same form as \eqref{equ: def f(s)}. Its trace ${\rm Tr}(\Re (f(s)))$ inherits the analyticity and harmonicity of $f(s)$ on the closed right-half plane.
% Then, an open-loop stable, strictly proper, and minimum-phase $G(s)$ has the following result
% \begin{equation}
%     \pi{\rm Tr}(\Re(f(a))) = \int_{-\infty}^{\infty} {\rm Tr}(\varsigma(\omega))\mathscr{P}_a(\omega){\rm d}\omega
% \end{equation}
% This result reflects the damping conservation \cite{Peng_Impact_2024} from a frequency-domain perspective.

\section{Extension to Generalized Definitions}\label{subsec: extension to generalized passivity definition}

Apart from the classical passivity definition setting $\mathcal{R}=I$ in Fig.~\ref{figure: Passivity_Transformation}(b), some generalized definitions are also employed to extend the applicability of the passivity theory, which can also be contained in our framework.

% \subsection{Generalization to generalized Passivity Definition}

\begin{definition}[$\mathcal{R}$-OF Passivization]
    An LTI system $G(s)$ is $\mathcal R$-OF passivizable with respect to a rational operator $\mathcal R(s)$ and an index $\sigma\in\mathbb R$ if the frequency response of $\mathcal R$ is well-defined on the considered frequency set and $H^D(s)=\mathcal R(s)(I-\sigma G(s))^{-1}G(s)$ is well-defined and passive.
    %$H^D(s)$ defined in \eqref{equ: def R-OFP tf} is passive:
    % \begin{equation}\label{equ: def R-OFP tf}
    %     H^D(s) = \mathcal{R}(s)\cdot (I - G(s)\sigma)^{-1}G(s)
    % \end{equation}
    % The matrix $\sigma$ is called its $\mathcal{R}$-OF passivity index.
\end{definition}

{\color{black}
\begin{theorem}[OF-PD Condition for Generalized Definition]\label{theo: generalized}
Let $G(s)$ be a scalar transfer function satisfying Assum.~\ref{assump: transfer function}. For a passivity index $\sigma$ and a frequency $\omega$ with $j\omega$ not a pole of either $G$ or $\mathcal R$ and $1-\sigma G(j\omega)\neq0$, the generalized transformed system $H^D(s)=\mathcal R(s)G(s)/(1-\sigma G(s))$ satisfies the PD condition at $\omega$ if and only if
\begin{equation}\label{equ: PR condition generalized}
\begin{aligned}
    \Re(\mathcal{R}(j\omega))\Re(G(j\omega))-\Im(\mathcal{R}(j\omega))\Im(G(j\omega))
    \geq \sigma |G(j\omega)|^2\Re(\mathcal{R}(j\omega)).
 \end{aligned}
\end{equation}
\end{theorem}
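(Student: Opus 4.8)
The plan is to reduce the matrix PD condition to a single scalar real-part inequality and then compute that real part directly, since for a SISO system everything is scalar-valued. The generalized transfer function is $H^D(j\omega) = \mathcal{R}(j\omega)\,W(j\omega)$ with $W(j\omega) = G(j\omega)/(1-\sigma G(j\omega))$, and the PD condition $H^D + (H^D)^H \succeq 0$ collapses to the single requirement $\Re(H^D(j\omega)) \geq 0$. So the whole argument amounts to writing $\Re(\mathcal{R}W)$ in terms of $\Re(G)$, $\Im(G)$, $\Re(\mathcal{R})$, $\Im(\mathcal{R})$ and reading off when it is nonnegative.

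First I would reuse the computation already performed in Theorem \ref{theo: SISO}: multiplying numerator and denominator of $W$ by $\overline{1-\sigma G}$ and using $G\bar G = |G|^2$ gives $W = (G-\sigma|G|^2)/|1-\sigma G|^2$. Because $\sigma$ and $|G|^2$ are real, this separates immediately into $\Re(W) = [\Re(G)-\sigma|G|^2]/|1-\sigma G|^2$ and $\Im(W) = \Im(G)/|1-\sigma G|^2$, the real part being exactly the quantity that appeared in the proof of Theorem \ref{theo: SISO}. Next I would expand the product via $\Re(\mathcal{R}W) = \Re(\mathcal{R})\Re(W) - \Im(\mathcal{R})\Im(W)$, obtaining a single fraction whose denominator is $|1-\sigma G|^2$.

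The key observation — and really the only point requiring care — is that this denominator is strictly positive precisely because the hypothesis guarantees $1-\sigma G(j\omega)\neq 0$. Hence multiplying the inequality $\Re(H^D(j\omega))\geq 0$ through by $|1-\sigma G|^2$ preserves its direction, and the resulting numerator inequality is exactly $\Re(\mathcal{R})\Re(G) - \Im(\mathcal{R})\Im(G) \geq \sigma|G|^2\Re(\mathcal{R})$, which is \eqref{equ: PR condition generalized}. Both directions follow simultaneously since every step is an equivalence.

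There is no substantive obstacle here; the entire argument is a real-part bookkeeping exercise, and the only things to watch are the sign in the term $-\Im(\mathcal{R})\Im(W)$ and the positivity of the denominator. I would close by verifying consistency with the surrounding results: setting $\mathcal{R}\equiv 1$ (so $\Re(\mathcal{R})=1$, $\Im(\mathcal{R})=0$) recovers Theorem \ref{theo: SISO}, while $\mathcal{R}(j\omega)=j\omega$ (so $\Re(\mathcal{R})=0$, $\Im(\mathcal{R})=\omega$) reduces the condition to $-\omega\Im(G)\geq 0$, i.e. $\Im(G)\leq 0$ for $\omega\geq 0$, matching Example \ref{example: 1}. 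These checks confirm the sign conventions are correct.
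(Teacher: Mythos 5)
Your proposal is correct and follows essentially the same route as the paper's own (much terser) proof: decompose $\mathcal{R}$ and $G$ into real and imaginary parts, reuse the rationalization $W=(G-\sigma|G|^2)/|1-\sigma G|^2$ from Theorem \ref{theo: SISO}, and divide out the strictly positive denominator $|1-\sigma G|^2$. The explicit expansion of $\Re(\mathcal{R}W)$ and the consistency checks against $\mathcal{R}=1$ and $\mathcal{R}=j\omega$ are a welcome addition but do not change the argument.
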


\begin{proof}
Write $\mathcal R(j\omega)=a+jb$ and $G(j\omega)=u+jv$. Then
\begin{equation}
\Re\left(\frac{\mathcal R G}{1-\sigma G}\right)
=\frac{au-bv-\sigma a(u^2+v^2)}{|1-\sigma G|^2}.
\end{equation}
Since $|1-\sigma G|^2>0$ by the well-definedness assumption, the PD condition is equivalent to $au-bv\geq\sigma a(u^2+v^2)$, which is \eqref{equ: PR condition generalized}.
\end{proof}}

Some extended passivity definitions employed in power system analysis are provided as follows

\begin{example}\label{example: 1}
    Let $\mathcal{R}=s$ as the differential passivity used in distributed stability analysis \cite{Yang_Distributed_2020}, also recognized as negative imaginariness \cite{Xiong_Negative_2010, Bhowmick_LTI_2017}. Then, the condition \eqref{equ: PR condition generalized} is equivalent to $\Im(G)\leq 0$ for $\forall\omega\geq 0$, 
    % \begin{equation}
    % \begin{cases}
    %     \Im(G)\leq 0, \text{ if }\omega\geq 0\\
    %     \Im(G)\geq 0, \text{ if }\omega\leq 0\\
    % \end{cases}
    % \end{equation}
    which corresponds to the lower half plane.
\end{example}

\begin{example}\label{example: 2}
    Let $\mathcal{R}=1$ and $sG(s)$ replacing $G(s)$ used in power grid code generation \cite{Peng_Compositional_2025}. Then, the condition \eqref{equ: PR condition generalized}% is equivalent to $\omega \sigma |G|^2 + \Im(G)\leq 0$ for $\forall\omega\geq 0$,
    % \begin{equation}
    % \begin{cases}
    %     \omega \sigma |G|^2 + \Im(G)\leq 0,\text{ if }\omega\geq 0\\
    %     \omega \sigma |G|^2 + \Im(G)\geq 0,\text{ if }\omega\leq 0\\
    % \end{cases}
    % \end{equation}
    represents a disk with $\partial \mathcal{D}$ centered on the imaginary axis for a given $\omega$ when $\sigma>0$.
\end{example}

% \begin{example}
%     Let $\mathcal{R}=1/(Ts+1)$. Then, the condition \eqref{equ: PR condition generalized} is equivalent to
%     \begin{equation}
%     {|G|^2}\sigma\leq{\Re(G)+\omega T \Im(G)}
%     \end{equation}
%     For the cases $\sigma\geq0$, it represents a disk passing through $(0,j0)$ and $(1/\sigma,j0)$ and the imaginary part of the disk center controlled by $\omega$ and $T$.
% \end{example}

\begin{example}\label{example: 3}
    Let $\mathcal{R}=s/(\varepsilon s+1)$ as the washout loop \cite{Dey_PassivityBased_2023a}. Then, the condition \eqref{equ: PR condition generalized}% equals $\varepsilon \omega[\sigma\Re^2(G)-\Re(G)]+[\varepsilon \omega\sigma \Im^2(G)+\Im(G)]\leq 0$. When $\sigma\geq0$, it 
    represents a disk where $\partial \mathcal{D}$ passes through $(0,j0)$ and $(1/\sigma,j0)$ with the imaginary part of its center controlled by $\varepsilon \omega$ when $\sigma>0$.
\end{example}

%The other examples include:
% \begin{itemize}
%     \item Differential passivity \cite{Yang_Distributed_2020}, also recognized within control theory communities as negative imaginariness \cite{Xiong_Negative_2010}, takes the form of $\mathcal{R}(s)=sI$.
%     \item Varied differential passivity \cite{Peng_Compositional_2025} takes the form of $\mathcal{R}(s)=I$ and $sG(s)$ replacing $G(s)$.
%     \item The washout $\mathcal{R}=s/(\varepsilon s+1)$ is also employed in the analysis of power systems \cite{Dey_PassivityBased_2023a}.
% \end{itemize}
% \subsection{Positive Damping Condition}

To deal with possibly existing explicit $\omega$ in $\mathcal{P}_{\rm PD}(\omega)$, consider a three-dimensional coordinate $(\Re(G(j\omega)),\Im(G(j\omega)),\omega)$. Let $\mathcal R(j\omega)=a(\omega)+jb(\omega)$ and $G(j\omega)=u+jv$. The generalized PD region is determined by
\begin{equation}
    a u-b v\geq \sigma a(u^2+v^2).
\end{equation}
When $\sigma a>0$, this is the interior of the disk
\begin{equation}
\left(u-\frac{1}{2\sigma}\right)^2+
\left(v+\frac{b}{2\sigma a}\right)^2
\leq
\frac{1}{4\sigma^2}\left(1+\frac{b^2}{a^2}\right).
\end{equation}
When $\sigma a<0$, the admissible region is the exterior of the same disk. When $a=0$, the condition degenerates to the half-plane $-bv\geq0$; when $\sigma=0$, it degenerates to the half-plane $au-bv\geq0$. Therefore, the closed-form disk expression must be interpreted with this sign classification.
Similarly, we derive the following results for the PD frequency bands.

\begin{corollary}
Consider a scalar transfer function $G(s)$ satisfying Assum.~\ref{assump: transfer function} w.r.t. a passivity index $\sigma$ and define its PD frequency bands $\Omega_{\rm{PD}}^D(\sigma)= \{\omega\geq0: H^D(j\omega) + (H^D)^H(j\omega) \succeq 0\}$.
% \begin{equation}
% \Omega_{\rm{PD}}^D(\sigma) = \{\omega\geq0: H^D(j\omega) + (H^D)^H(j\omega) \succeq 0\}
% \end{equation}
Then, $\Omega^D_{\rm PD}(\sigma)=\left\{\omega\geq0\right.$: the condition \eqref{equ: PR condition generalized} is satisfied.$\left.\right\}$
\end{corollary}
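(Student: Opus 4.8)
The plan is to establish the claimed set equality by a pointwise application of Theorem \ref{theo: generalized} combined with a straightforward double-inclusion argument. First I would unfold the definitions: by construction $\omega \in \Omega^D_{\rm PD}(\sigma)$ precisely when $\omega \geq 0$ and the passivized transfer function satisfies $H^D(j\omega) + (H^D)^H(j\omega) \succeq 0$, which in the scalar SISO case is exactly the PD condition \eqref{equ: PR condition 2} being met at that single frequency. The set on the right-hand side collects exactly those $\omega \geq 0$ for which the algebraic inequality \eqref{equ: PR condition generalized} holds. Thus the corollary reduces to checking that, frequency by frequency, these two membership criteria agree.

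Next I would invoke Theorem \ref{theo: generalized}, which asserts that for any admissible frequency $\omega$—one where $j\omega$ is not a pole of $G$ or $\mathcal{R}$ and $1 - \sigma G(j\omega)\neq 0$—the PD condition \eqref{equ: PR condition 2} holds at $\omega$ if and only if \eqref{equ: PR condition generalized} holds. Because this is an if-and-only-if statement at each fixed frequency, the necessity direction yields the inclusion of $\Omega^D_{\rm PD}(\sigma)$ into the right-hand set, while the sufficiency direction yields the reverse inclusion; taken together they give the desired equality. No new analysis is needed beyond quoting the theorem, since the substance of the equivalence is already contained there.

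The one point requiring care is the bookkeeping of the degenerate frequencies at which $j\omega$ is a pole of $G$ or $\mathcal{R}$, or at which $1 - \sigma G(j\omega) = 0$. At such frequencies $H^D(j\omega)$ fails to be well-defined, so they lie outside both sets by convention: the defining condition $H^D(j\omega) + (H^D)^H(j\omega) \succeq 0$ presupposes that $H^D(j\omega)$ exists, and \eqref{equ: PR condition generalized} is correspondingly understood only at those $\omega$ where its constituent quantities are defined. Consequently these excluded frequencies are treated consistently on both sides and do not affect the equality. I expect this handling of the excluded frequency set to be the only genuine obstacle, and it is a minor one; the corollary is essentially a repackaging of Theorem \ref{theo: generalized} as a statement about sets of frequencies rather than about a single frequency.
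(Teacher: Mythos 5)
Your argument is correct and is essentially the route the paper intends: the paper offers no explicit proof, remarking only that the corollary follows easily from Theorem \ref{theo: generalized}, and your pointwise double-inclusion together with the bookkeeping of the excluded frequencies is exactly that derivation made explicit.
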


\section{Encoding Passivization into Classic Tools}
The method's simplicity allows it to be encoded into classic analysis and design tools.
In various applications, the configuration illustrated in Fig.~\ref{figure: Passivity_Transformation} comprises an uncontrollable $G_2$ and a controllable $G_1$. Besides, the OF passivity index $\sigma$ for $G_1$ is commonly predetermined or has a minimum threshold to guarantee the closed-loop stability. 
% Thus, this section discusses the control designs with OF passivization constraints under a predetermined passivity index.

For example, in the stability analysis and stability-aware grid code designs of power systems \cite{Yang_Distributed_2020, Peng_Compositional_2025, Haberle_Dynamic_2024}, %  % ref
$G_2$ represents the network with an IF passivity index $\sigma_2<0$, which is usually challenging to regulate. 
% Therefore, to maintain system stability, the OF passivity index $\sigma_1$ of device-representing subsystems $G_1$ must conform to $\sig

The proposed method asserts that, to maintain closed-loop stability, we desire the device dynamics $G_1$ to lie in the PD region w.r.t. $\sigma_1>-\sigma_2$. Notably, $G_1$ does not need to follow a specific structure or even admit an analytical model. Relaxing these modeling requirements increases flexibility in analysis and controls.

\subsection{Graphic Analysis on Nyquist and Nichols Plots}
The above analysis presents the proposed condition in Nyquist plots with the $\Re(G)$-$\Im(G)$ coordinates.
Apart from that, it is also easily applicable to the Nichols plot with $\log|G|$-$\angle G$ coordinates. Specifically, \eqref{equ: PR condition SISO} equals to $20\log|G(j\omega)|\leq 20\log|\cos(\angle G)|-20\log(\sigma)$
% \begin{equation}
%     20\log|G(j\omega)|\leq 20\log|\cos(\angle G)|-20\log(\sigma)
% \end{equation}
for $\{\omega: |\angle G(j\omega)|\leq\pi/2\}$.
The Nichols plot displays the system's dynamic performance, such as gain margin, phase margin, resonant peak, and bandwidth under unit negative feedback.
Thus, it holds significance in circuit and aircraft controls \cite{Dorf_Modern_2022}. An example is given in Fig.~\ref{figure: Illu_2_Ctrl}.

\begin{figure}[htbp]
    \centering
    \includegraphics[width=3.2in]{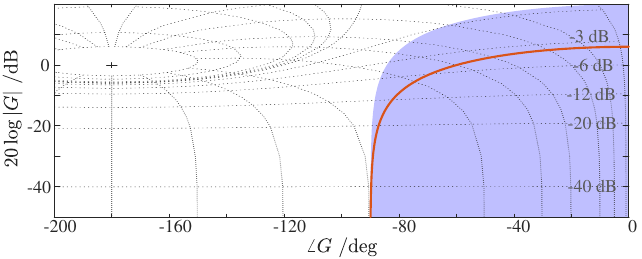}
    \caption{Encoding with Nichols plot. The orange curve is the Nichols plot of $G_1$ in \eqref{equ: case TF}, and the PD region $\mathcal{P}_{\rm PD}(0.1)$ is the purple region.}
 \label{figure: Illu_2_Ctrl}
\end{figure}
Utilizing the Nichols plot, control designs can be performed by deploying the classic frequency-response shaping method. 
% The core idea is to design the controller s.t. the Nyquist trajectory of the equivalent open-loop transfer function satisfies the predefined performance requirements on the Nichols plot. 
The proposed method does not alter the fundamental approach of frequency response shaping; rather, it introduces an additional performance requirement wherein the trajectory must stay within the PD region (e.g., the purple region in Fig.~\ref{figure: Illu_2_Ctrl}), thereby ensuring that $G_1$ is OF passive w.r.t. the index $\sigma_1$. % The tuning process for other performance criteria remains entirely consistent with classical methods. 
We refer interested readers to \cite[Chapter 9]{Dorf_Modern_2022} for further details on the specific design and tuning procedures.

\subsection{Generalized-KYP-Based Parameter Tuning}\label{subsec: control tuning theory}

Although full-frequency passivity may be unattainable in some practical systems, adequate passivity over finite, especially low-frequency, bands is often sufficient in practice. The proposed method translates this engineering requirement into LMI constraints through the generalized KYP lemma \cite{Iwasaki_Generalized_2005}, thereby providing a systematic synthesis method that is consistent with the proposed theory-to-practice perspective.

% Thereby, the control design problem can be transformed into convex feasibility or optimization problems.
% The generalized KYP lemma guarantees that if the Nyquist plot of some transfer functions lies within a specific hyperbolic region over a given frequency band, it possesses an equivalent LMI formulation. The proposed method does not alter the fundamental workflow of control design based on the generalized KYP lemma. In contrast, it merely introduces an additional constraint. 
% Moreover, the PD region is a disk in the complex plane and can thus be expressed as LMI constraints. 

Specifically, consider a plant $P(s)$ with the minimal realization $(A,B,C,D)$ and a PI controller% +k_ds/(\tau s+1), where $\tau$ is a given filter time constant.
\begin{equation}\label{equ: leaky-PI}
    K(s)=k_p+{k_i}/{(s+\varepsilon)}
\end{equation}
In this formulation, the leaky integrator $k_i/(s+\varepsilon)$ is employed to avoid negative damping of the pure integrator.
The PI parameters awaiting tuning can be expressed as the affine vector $\rho=[k_p, k_i]^T$.
Then, it is desired that $G(s)=P(s)K(s)$ satisfies the finite-frequency OF-PD condition over the predetermined low-frequency band $|\omega|\leq\omega_c$ w.r.t. an index $\sigma>0$, i.e.,
\begin{equation}\label{equ: control target}
    G(j\omega)\in\mathcal{P}_{\rm PD}(\sigma), \forall -\omega_c\leq\omega\leq \omega_c
\end{equation}

First, the PD condition $G(j\omega)\in\mathcal{D}(1/(2\sigma),1/(2\sigma))$ can be equivalently expressed as \eqref{equ: passivity LMI}. %with element-wise expression \eqref{equ: Pi element-wise expr}.
Second, the frequency restriction $|\omega|\leq\omega_c$ can be reformulated as
\begin{equation}
    \Lambda(\Phi, \Psi) = \left\{ \lambda \in \mathbb{C} : \begin{bmatrix} \lambda \\ 1 \end{bmatrix}^* \Phi \begin{bmatrix} \lambda \\ 1 \end{bmatrix} = 0, \begin{bmatrix} \lambda \\ 1 \end{bmatrix}^* \Psi \begin{bmatrix} \lambda \\ 1 \end{bmatrix} \ge 0 \right\}
\end{equation}
where
\begin{equation}
    \Phi=\begin{bmatrix} 0 & 1 \\ 1 & 0 \end{bmatrix},
    \Psi = \begin{bmatrix} -1 & 0 \\ 0 & \omega_c^2 \end{bmatrix}
\end{equation}
Then, the equivalent open-loop system $G(s)=P(s)K(s)$ has the minimal realization $(A_L,B_L,C_L,D_L)$ with the following state matrices
\begin{equation}
\begin{aligned}
    &A_L=\begin{bmatrix}
        -\varepsilon & 0\\
        B & A
    \end{bmatrix}
    &B_L=\mathcal{M}_B\rho=\begin{bmatrix}
        0 & 1\\
        B & 0
    \end{bmatrix}\rho\\
    &C_L=\begin{bmatrix}
        D & C
    \end{bmatrix}
    &D_L =\mathcal{M}_D\rho= \begin{bmatrix}
        D & 0
    \end{bmatrix}\rho
\end{aligned}
\end{equation}

Compute the following coefficient matrix
\begin{equation}
\begin{aligned}
    W &= \begin{bmatrix} A_L & I \\ C_L & 0 \end{bmatrix} (\Phi \otimes P + \Psi \otimes Q) \begin{bmatrix} A_L & I \\ C_L & 0 \end{bmatrix}^T\\
    &= \begin{bmatrix} A_L P + P A_L^T - A_L Q A_L^T + \omega_c^2 Q & P C_L^T - A_L Q C_L^T \\ C_L P - C_L Q A_L^T & -C_L Q C_L^T \end{bmatrix}
\end{aligned}
\end{equation}
\begin{equation}
    \begin{aligned}
        V = \begin{bmatrix} 0 & B_L\Pi_{12} \\ \Pi_{12}^T B_L^T & D_L\Pi_{12} + \Pi_{12}^T D_L^T + \Pi_{22} \end{bmatrix}=\frac{-1}{2\sigma}\begin{bmatrix} 0 & B_L \\ B_L^T & 2D_L\end{bmatrix}
    \end{aligned}
\end{equation}
\begin{equation}
    T = \begin{bmatrix}
        B_L^T & D_L^T
    \end{bmatrix}^T
\end{equation}

The submatrix $\Pi_{11}=1>0$ satisfies \cite[Assumption 1]{Hara_Robust_2006}.
Then, by \cite[Proposition 1]{Hara_Robust_2006}, the target \eqref{equ: control target} can be achieved if there exist Hermitian matrices $P$ and $Q$ and the vector $\rho\geq0$ such that the following LMI \eqref{equ: LMI} holds.
\begin{equation}\label{equ: LMI}
    \Xi=
    \begin{bmatrix}
        W+V & T\\
        T^T & -1
    \end{bmatrix}=
    \begin{bmatrix} \Xi_{11} & \Xi_{12} & \Xi_{13} \\ \Xi_{12}^T & \Xi_{22} & \Xi_{23} \\ \Xi_{13}^T & \Xi_{23}^T & -1 \end{bmatrix} \preceq 0,\; Q\succeq0
\end{equation}
where the entry-wise expressions are
\begin{equation}
    \begin{aligned}
        &\Xi_{11} = A_L P + P A_L^T - A_L Q A_L^T + \omega_c^2 Q\\
        &\Xi_{12} = P C_L^T - A_L Q C_L^T - \frac{1}{2\sigma} \mathcal{M}_B \rho\\
        &\Xi_{22} = -C_L Q C_L^T - \frac{1}{\sigma} \mathcal{M}_D \rho, \; \Xi_{13} = \mathcal{M}_B \rho, \; \Xi_{23} = \mathcal{M}_D \rho
    \end{aligned}
\end{equation}
The above derivations can be summarized as follows.
\begin{corollary}[Passivity-Aware Parameter Tuning]\label{corollary: control LMI}
    Consider a minimal-realized plant $P(s)$ with a PI controller $K(s)$ in \eqref{equ: leaky-PI}.
    Then, the equivalent open-loop transfer function $G(s)=P(s)K(s)$ satisfies \eqref{equ: control target} if there exist Hermitian matrices $P$ and $Q$ and the vector $\rho=[k_p,k_i]^T\geq0$ for the LMI \eqref{equ: LMI}.
\end{corollary}

An example from power inverter control is given in Section \ref{sec: application: control}.
The method extends to any controller with an affine parameter structure \cite{Iwasaki_Generalized_2005}. 
By encoding this constraint, automatic parameter tuning can be performed to render the system OF passive.

\section{Application I: Visualized Passivity Analysis}
This section illustrates the application of the proposed method in passivity analysis.
Consider the transfer functions \eqref{equ: case TF} from electrical power system analysis.
Here, $G_1, G_2, G_3$ and $G_4$ represent the voltage dynamics \cite{Yang_Distributed_2020}, phase-angle dynamics \cite{Yang_Distributed_2020}, frequency swing dynamics with power filter\cite{Yang_Distributed_2020}, and angle-voltage coupled control strategy with cross-loop gain $G_c=C/(Ts+1)$ \cite{Peng_Impact_2024}, respectively. %where $\Delta \theta, \Delta \omega, \Delta V, \Delta P$, and $\Delta Q$ represent phase-angle, frequency, voltage, active and reactive power, respectively.
% In the simulation, we use the following parameters: $\tau=0.1$, $k=0.5$, $M=0.3$, $d=0.5$, $T=0.02$, and $C=0.1$.
$G_5$ represents the controllers derived from time-domain grid codes \cite{Haberle_Dynamic_2024}.
\begin{equation}\label{equ: case TF}
\begin{aligned}
    &G_1=\frac{1}{\tau s+k},\qquad\qquad\qquad\quad
    G_2=\frac{1}{s(Ms+d)}\\
    &G_3=\frac{1}{(Ts+1)(Ms+d)},\qquad
    G_4=\begin{bmatrix}
        G_3 & G_c\\
        G_c & G_1
    \end{bmatrix}\\
    &G_5=\frac{\alpha}{s}\left(\frac{1}{s}
        -(t^r+\frac{1}{s})\frac{1-\frac{t^r}{2}s}{1+\frac{t^r}{2}s}
        +t^r\frac{1-\frac{t^r}{2}s}{1+\frac{t^r}{2}s}
        -t^r\frac{1-\frac{t^e}{2}s}{1+\frac{t^e}{2}s}\right)
\end{aligned}  
\end{equation}

\subsection{Graphic Analysis of Classical Passivity}

% The PD region $\mathcal{P}_{\rm PD}(1)$ is indicated by the red disk in Fig.~\ref{figure: Simulation1_SISO}(a). Since the Nyquist plot of the open-loop transfer function $G_1$ is outside this region at all frequencies, $G_1$ cannot be OF passivized throughout the frequency bands w.r.t. the index $\sigma = 1$. To ensure that $G_1(j\omega)\in\mathcal{P}_{\rm PD}(\sigma)$ for all $\omega$, the diameter of the disk $1/\sigma$ must be greater than the rightmost point of the Nyquist plot, which is $1/k = 2$. Here, we select $1/\sigma = 3$, corresponding to the purple disk, under which $G_1$ becomes OF passivizable w.r.t. the passivity index $\sigma=1/3$.

Fig.~\ref{figure: Simulation1_SISO}(a) shows that $G_1$ is not OF-passivizable for $\sigma=1$ since its Nyquist plot stays outside $\mathcal{P}_{\rm PD}(1)$ (red disk). Ensuring $G_1(j\omega)\in\mathcal{P}_{\rm PD}(\sigma)$ for all $\omega$ requires the disk diameter $1/\sigma$ to exceed the rightmost Nyquist value $1/k=2$.
Here, we select $1/\sigma = 3$ (purple disk), which makes $G_1$ OF passive.

\begin{figure}[htbp]
    \centering
    \includegraphics[width=3.4in]{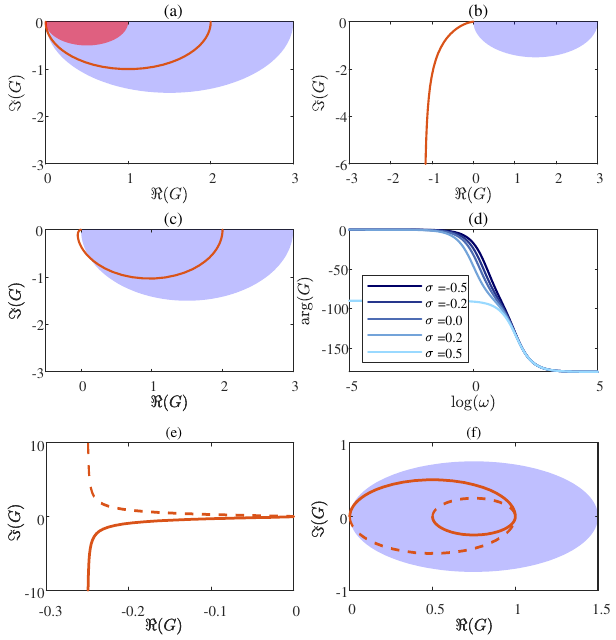}
    \caption{Analysis of SISO cases. (a): Nyquist plot (orange curve) of $G_1$ and the PD region $\mathcal{P}_{\rm PD}(1)$ (red disk) and $\mathcal{P}_{\rm PD}(1/3)$ (purple disk). (b): Nyquist plot of $G_2$. (c): Nyquist plot of $G_3$. (d): Bode plot of $G_3$. (e): Nyquist plot of $G_5$. (f): Nyquist plot of $sG_5$.}
 \label{figure: Simulation1_SISO}
\end{figure}

Fig.~\ref{figure: Simulation1_SISO}(b) shows that the Nyquist plot of $G_2$ cannot intersect any disk w.r.t. $\sigma \geq 0$, implying that $G_2$ cannot be passivized via OF for any $\sigma \geq 0$. 
In other words, the synchronous generator dynamics are not passive.
Although this conclusion can be derived analytically, the proposed geometric approach offers a more intuitive and quicker check.

The Nyquist plot of $G_3$ is shown in Fig.~\ref{figure: Simulation1_SISO}(c). It can be visually confirmed that $G_3$ cannot be fully passivized across all frequencies, especially the high-frequency bands. For the purple disk w.r.t. $\sigma = 1/3$, the system meets the PD condition only when the Nyquist curve lies inside the disk, corresponding to the critical frequency $\omega_c\leq 5.3703$. Moreover, as $\sigma$ increases, the PD frequency band monotonically contracts as analyzed in previous sections. As illustrated in Fig.~\ref{figure: Simulation1_SISO}(d), for $\sigma =$ -0.5, -0.2, 0, 0.2, 0.5, the critical frequencies are 13.1826, 10.9648, 9.3325, 7.0795, and 0.0000, respectively.

The passivization under the generalized definition in Ex.~\ref{example: 2} can be visualized using the classical PD-region result.
Although $G_5$ is non-passive under the classical definition in Fig.~\ref{figure: Simulation1_SISO}(e), the inclusion 
$j\omega G_5(j\omega)\in\mathcal{P}_{\rm PD}(2/3)$ in Fig.~\ref{figure: Simulation1_SISO}(f) indicates passivity in the sense of Ex.~\ref{example: 2}.
It also illustrates the advantages of the proposed method: although the expression of $G_5$ in \eqref{equ: case TF} is quite complex, the graphic analysis remains intuitive.

\begin{figure}[htbp]
    \centering
    \includegraphics[width=3.4in]{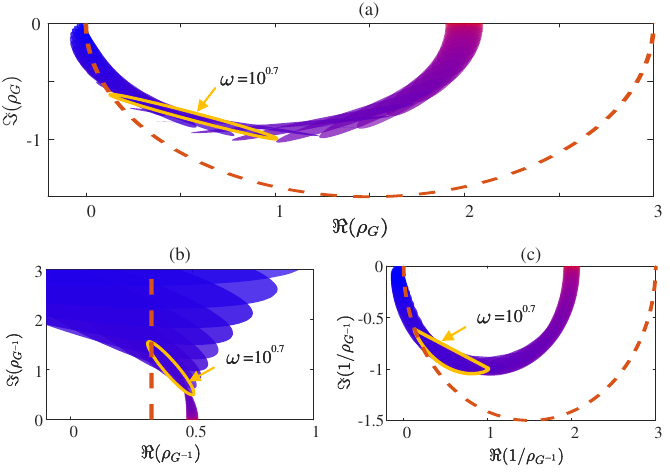}
    \caption{Analysis of MIMO cases. (a): $\rho_{G(j\omega)}$. (b): $\rho_{G^{-1}(j\omega)}$ with the PD region \eqref{equ: MIMO rayleigh equivalent}. (c): $\rho_{G^{-1}(j\omega)}$ with the PD region \eqref{equ: MIMO rayleigh equivalent-inverse}. 
    The orange dashed circle denotes the PD region $\mathcal{P}_{\rm PD}(1/3)$.
    Each colored disk represents the Rayleigh quotient $\rho_G(j\omega)$ at a given frequency $\omega$. 
    For example, the yellow circle represents the boundary of the $\rho_{G(j10^{0.7})}(x)$, which intersects the PD region. Here we present the frequencies with  $\log(\omega)$ from $-3.0:0.1:2.0$.}
 \label{figure: Simulation1_MIMO}
\end{figure}

The analysis remains applicable to MIMO systems. Fig.~\ref{figure: Simulation1_MIMO}(a) illustrates the Rayleigh quotient of $\rho_{G}$ as delineated in $G_4(j\omega)$  and the PD region w.r.t. the passivity index $\sigma=1/3$. According to \eqref{equ: MIMO LMI expression} of Thm.~\ref{theo: MIMO}, $G_4$ cannot satisfy the PD condition and thus cannot be OF passivized in the high-frequency band $\omega\geq 10^{0.7}$ rad/s, where the Rayleigh quotient cannot reside within the PD region.
In Fig.~\ref{figure: Simulation1_MIMO}(b)-(c), the analysis of $\rho_{G^{-1}}$ aligns with the above results based on \eqref{equ: MIMO rayleigh equivalent} and \eqref{equ: MIMO rayleigh equivalent-inverse} of Thm.~\ref{theo: MIMO}.

\subsection{Graphic Analysis of Generalized Passivity}

Fig.~\ref{figure: Simulation2} illustrates the PD regions of $G_2$ and $G_3$ under the differential passivity definition in Ex.~\ref{example: 1}. For any negative imaginary transfer function, the differential passivity meets the PD condition for the OF system, which also explains the advantageous applicability of this definition in power systems \cite{Yang_Distributed_2020}. However, it should be noted that this does not imply that an arbitrary $\sigma$ can render the transfer function passive. This is because, in addition to the PD condition, passivity must also satisfy conditions 1) and 3) in Def.~\ref{def: passivity}, which can be guaranteed geometrically in a similar approach to Thm.~\ref{theo: SISO passivity condition}. For $G_2$, to ensure that its Nyquist plot, as shown in Fig.~\ref{figure: Simulation2}(a), does not encircle the point $(1/\sigma, j0)$, it is necessary to choose $1/\sigma<0$. For $G_3$, to prevent its Nyquist plot in Fig.~\ref{figure: Simulation2}(b) from encircling the point $(1/\sigma, j0)$, the condition $1/\sigma > 1/d$ must be satisfied, i.e., $\sigma < d$, which aligns with \cite{Yang_Distributed_2020, Peng_Compositional_2025}.

\begin{figure}[htbp]
    \centering
    \includegraphics[width=3.4in]{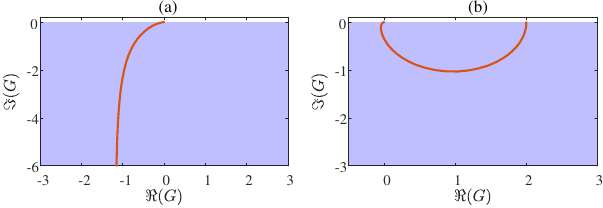}
    \caption{Analysis of differential passivization in Ex.~\ref{example: 1}. (a): Nyquist plot (orange) of $G_2$ with PD region (purple). (b): Nyquist plot of $G_3$ with PD region.}
 \label{figure: Simulation2}
\end{figure}

% \subsection{Other Generalized Passivity}

The PD region for the generalized passivity in Ex.~\ref{example: 2} is presented in Fig.~\ref{figure: Simulation3_EX2}. The case for $\sigma = 0.1$ is shown in Fig.~\ref{figure: Simulation3_EX2}(a). The Nyquist plots of all three transfer functions $G_1$, $G_2$, and $G_3$ lie entirely within the PD region, indicating that the corresponding systems are OF passivizable in the frequency band of interest.
Furthermore, we examine the cases for different values of $\sigma$. The corresponding PD regions for $\sigma = 0.1$, $0.3$, and $1.0$ are displayed in Fig.~\ref{figure: Simulation3_EX2}(b). Firstly, it can be observed that the region contracts as $\sigma$ increases. Secondly, $G_2$ has a positive damping only for $\sigma = 0.1$ and $0.3$, while certain frequency bands fall outside the region w.r.t. $\sigma = 1.0$ and thus are non-passivizable.

\begin{figure}[htbp]
    \centering
    \includegraphics[width=3.2in]{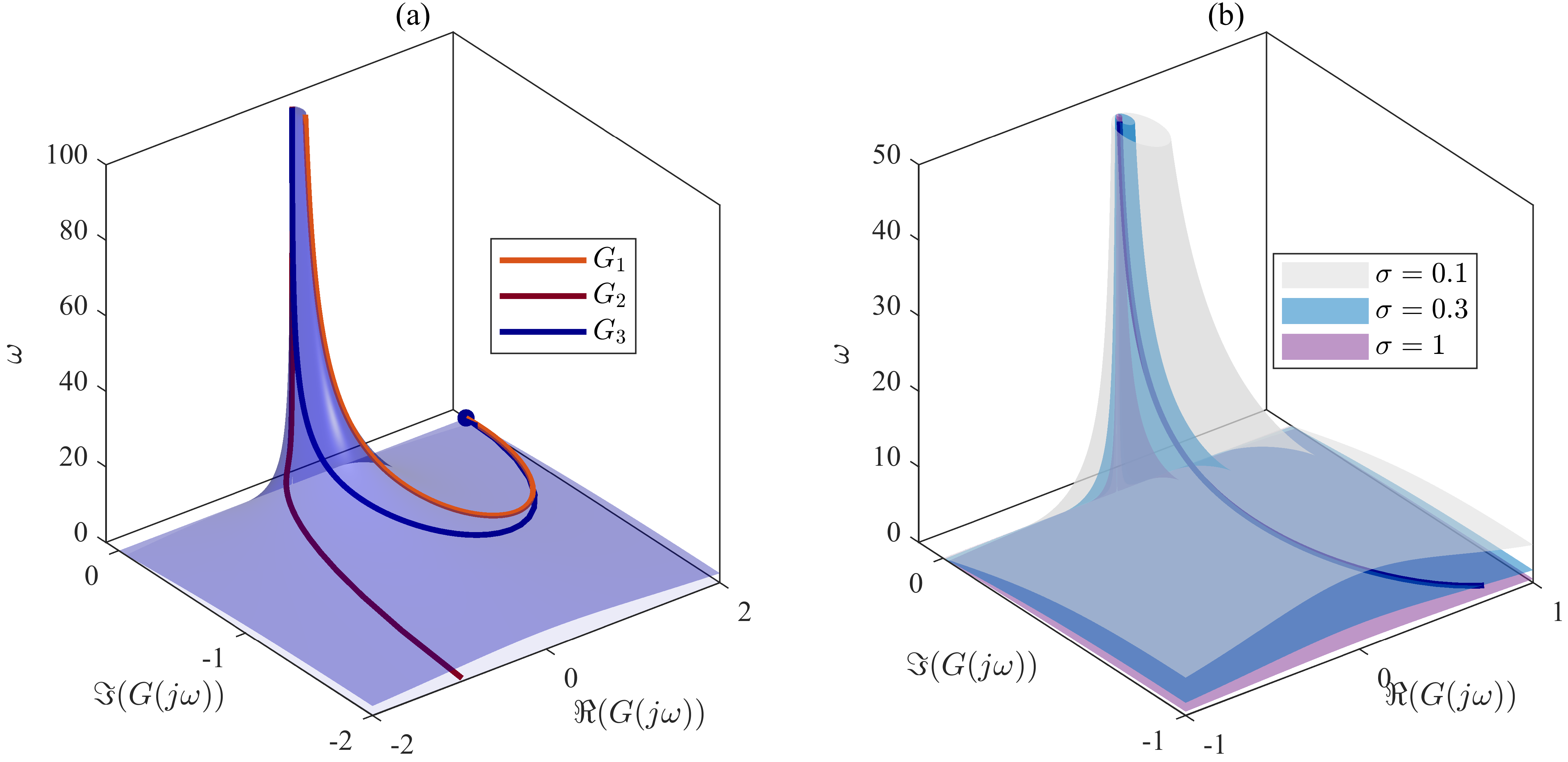}
    \caption{Analysis of Ex.~\ref{example: 2} passivization. (a): PD region w.r.t. $\sigma=$0.1 (purple region) and the Nyquist plots of $G_1$, $G_2$, and $G_3$. (b): PD regions w.r.t. $\sigma=$0.1, 0.3 and 1.0 and the Nyquist plot of $G_3$.}
 \label{figure: Simulation3_EX2}
\end{figure}

Different definitions of passivity lead to distinct PD regions. Fig.~\ref{figure: Simulation3_EX3} illustrates the PD regions corresponding to Ex.~\ref{example: 3}. Then, systems $G_1, G_2$, and $G_3$ remain passivizable, as illustrated in Fig.~\ref{figure: Simulation3_EX3}(a). Despite this, the PD region diverges from that in Ex.~\ref{example: 2}. Notably, Ex.~\ref{example: 2}'s PD region reduces to the origin as $\omega\to\infty$, while that of Ex.~\ref{example: 3} converges to $\mathcal{D}(1/2+j0, 1/2)$.

\begin{figure}[htbp]
    \centering    \includegraphics[width=3.in]{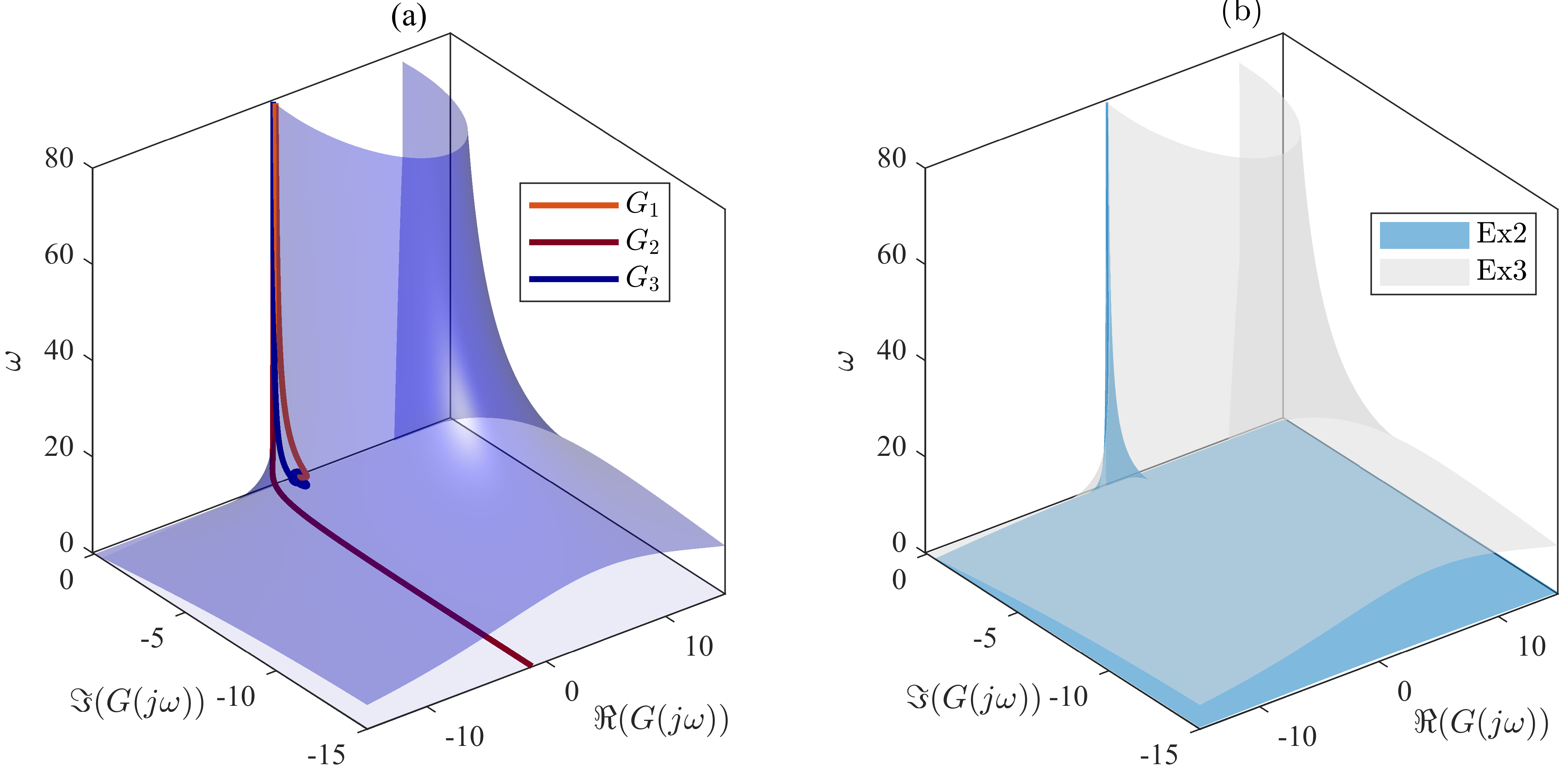}
    \caption{Analysis of  Ex.~\ref{example: 3} passivization. %The parameter is set as $\varepsilon=0.05$. 
    (a): PD region w.r.t. $\sigma=0.1$ (purple region) and the Nyquist plots of $G_1$, $G_2$ and $G_3$. (b): comparison of PD regions of Ex.~\ref{example: 2} and \ref{example: 3} w.r.t. $\sigma=0.1$. }
 \label{figure: Simulation3_EX3}
\end{figure}

\section{Application II: Passivity-Aware Controls}\label{sec: application: control}
\subsection{Passivity-Aware Control Design Method}
This section applies the proposed methods to passivity-aware control design.
Consider the voltage control scheme of grid-forming inverters in Fig.~\ref{figure: inverter}.
Detailed demonstration of inverter models can be found in classical books %\cite{mohan2003power} %ref
and also our attached source code.
A common strategy is to set the fixed-voltage reference to a static $K(s)=0$ in Fig.~\ref{figure: inverter}. 
Its dynamics can be approximated as \cite{Ravanji_Impact_2023}: 
\begin{equation}
\begin{aligned}
    G_{v}(s) &= \frac{G_i(k_{vp}s+k_{vi})}{C_fs^2+G_i(k_{vp}s+k_{vi})}\\
    G_i(s) &= \frac{k_{ip}s+k_{ii}}{L_fs^2+(k_{ip}+R_f)s+k_{ii}}
\end{aligned}
\end{equation}

Although simple to implement, the fixed-reference control might become unstable under stiff grids, as shown in Fig.~\ref{figure: inverter_transient}(a).
Therefore, a dynamic $K(s)$ is desired to enhance stability. 
Now, $G_v(s)$ serves as the plant awaiting the reshaping of its frequency response.
We particularly consider a PI structure \eqref{equ: leaky-PI} named power synchronization control.%\cite{Zhang_PowerSynchronization_2010}.

\begin{figure}[htbp]
	\centering
	\includegraphics[width=2.8in]{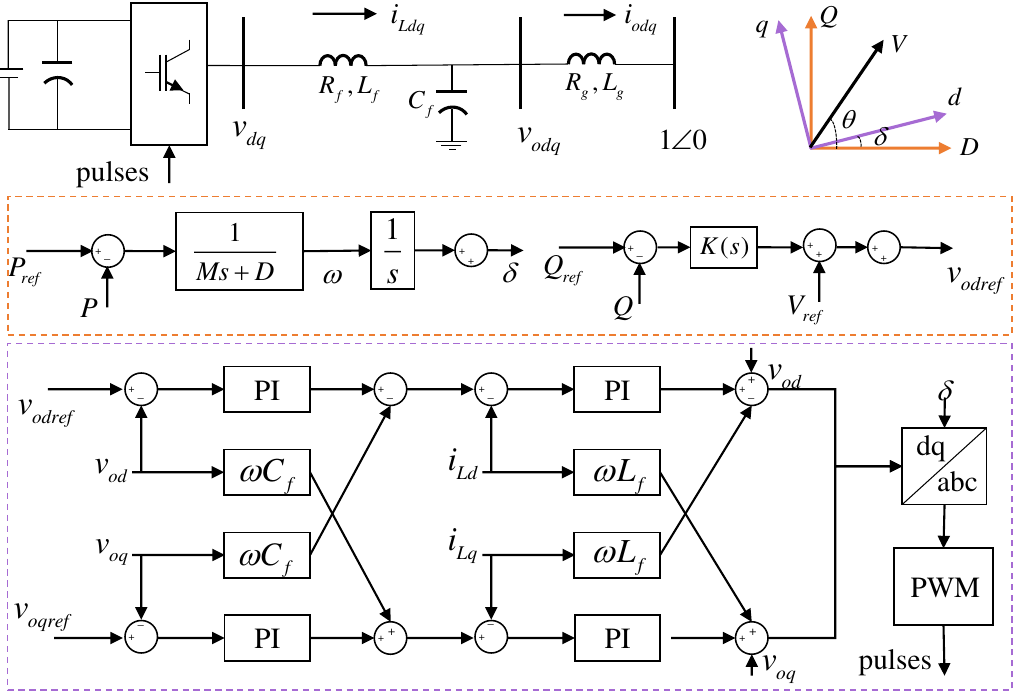}
	\caption{Grid-forming inverter model.}
 \label{figure: inverter}
\end{figure}

Now, our target is to tune the PI parameters $(k_p, k_i)$ in \eqref{equ: leaky-PI} to achieve the predetermined passivity performance through the proposed method. Specifically, the inverters are desired to satisfy a finite-frequency OF-PD index no smaller than $\sigma$ over the low-frequency band $-\omega_c\leq\omega\leq\omega_c$.
By employing Cor.~\ref{corollary: control LMI}, it can be transformed into solving the LMI \eqref{equ: LMI}.
A solution is $(k_p,k_i)=(0.0555, 0.0345)$. In the frequency domain, its Nyquist plot remains in the PD region under the given frequency, as shown in Fig.~\ref{figure: inverter_freq_analysis}(a). In electromagnetic transient simulations, it is also stabilized as depicted in Fig.~\ref{figure: inverter_transient}(a).
Moreover, the feasible $(k_p,k_i)$ satisfying the passivity requirement forms a region in Fig.~\ref{figure: inverter_freq_analysis}(b), which provides more degrees of freedom for other control targets.

\begin{figure}[htbp]
	\centering
	\includegraphics[width=3.1in]{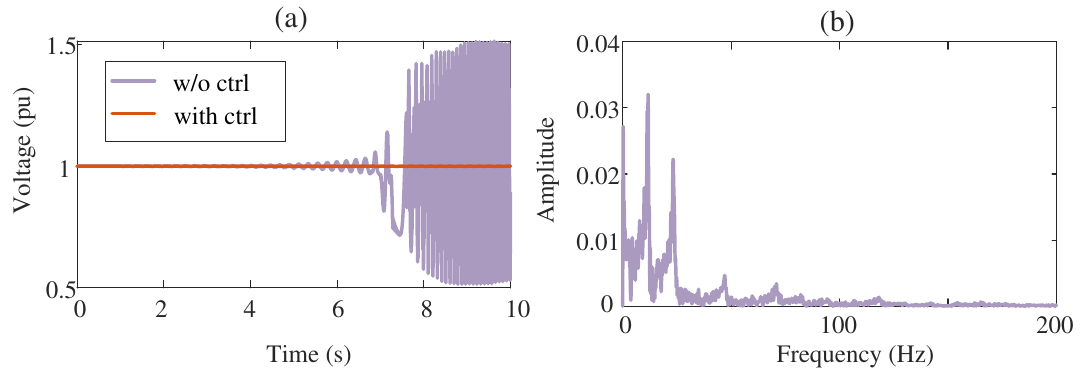}
	\caption{Transient simulation of grid-forming inverters with passivity-aware control designs. (a): voltage transients of inverters without or with dynamic controllers. A +1\% step change in the active-power reference $P_{\rm ref}$ is applied as the disturbance. (b): Fourier analysis of inverter voltage transients without dynamic controllers.}
 \label{figure: inverter_transient}
\end{figure}

\begin{figure}[htbp]
	\centering
	\includegraphics[width=3.1in]{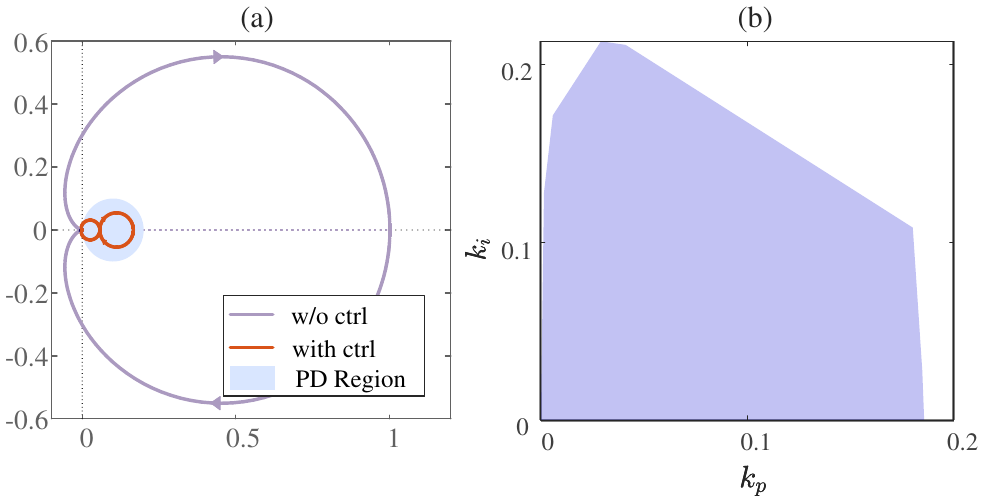}
	\caption{Passivity-aware control design for inverter voltage dynamics. (a): Nyquist plots without or with dynamic controllers. (b): feasible parameter region $(k_p, k_i)$ under given $\sigma=5$ and $\omega_c=100\cdot 2\pi$ rad/s.}
 \label{figure: inverter_freq_analysis}
\end{figure}

\subsection{Implementation Issues}
The following issues are worth noting in implementations:
\begin{itemize}
    \item Determination of targeted frequency bands: The cutoff frequency $\omega_c$ should cover the typical oscillation frequencies. In our example, the fixed-reference inverters mainly suffer from the oscillations under 100 Hz in Fig.~\ref{figure: inverter_transient}(b). Therefore, $\omega_c=100\cdot 2\pi$ rad/s is selected. 
    \item Determination of passivity index: The selection of the passivity index is critical. In power system distributed stability\cite{Peng_Compositional_2025}, the OF-passivity $\sigma_1$ of devices should exceed the IF-passivity shortage $\sigma_2$ of networks (see Fig.~\ref{figure: Passivity_Transformation}). Therefore, $\sigma_1>-\sigma_2$ with sufficient margins is preferred.
    \item Numerical robustness improvement: In practical implementations, the LMI \eqref{equ: LMI} is actually not solved with parameters $(A_L, B_L, C_L,D_L,\omega_c)$, but with $(A_L/\varpi, B_L/\sqrt{\varpi}, C_L/\sqrt{\varpi},D_L,\omega_c/\varpi)$, where $\varpi$ is the frequency scaling factor. Both results are identical since
    \begin{equation}
    \begin{aligned}
        L(s) &= C_L(sI - A_L)^{-1}B_L + D_L\\
        &= \left( \frac{C_L}{\sqrt{\varpi}} \right) \left( \frac{s}{\varpi} I - \frac{A_L}{\varpi} \right)^{-1} \left( \frac{B_L}{\sqrt{\varpi}} \right) + D_L
    \end{aligned}
    \end{equation}
    This transformation is performed to reduce numerical ill-conditions induced by the distinct order of magnitude between $\Xi_{11}$%=A_L P + P A_L^T - A_L Q A_L^T + \omega_c^2 Q
    and $\Xi_{33}$.%=-1 
    %When high-frequency modes exist for $P(s)$, says $\omega_m=10^3$ rad/s, $A_L Q A_L^T\sim\omega_m^2=10^6$ far more than $\Xi_{33}$, thus brings ill-conditions. In contrast, the scaling leads to $(A_L/\varpi) Q (A_L/\varpi)^T\sim(\omega_m/\varpi)^2$ and improves numerical robustness for LMI solvers.
\end{itemize}

\section{Conclusion}

This work develops a graphical framework that bridges passivization theory and engineering practice. By showing that the PD conditions associated with IF and OF passivization can be assessed exactly in SISO cases, exactly by LMI/inverse numerical-range tests in MIMO cases, and diagnostically by Rayleigh-quotient regions, the proposed method converts abstract frequency-domain conditions into a directly interpretable tool. Moreover, the framework interfaces naturally with classical analysis and design methods, including Nichols-plot-based reasoning and generalized KYP-based controller synthesis.

This perspective not only clarifies fundamental trade-offs, such as the contraction of the positive damping bandwidth as the passivity index increases, but also provides a unified basis for both classical and generalized passivity analysis. The case studies in power systems show how the proposed framework can support practical assessment, stability analysis, and controller tuning.

Future work will focus on the generalization to nonlinear systems. In particular, the extended passivity definition introduced in Sec~\ref{subsec: extension to generalized passivity definition} aligns with the Zames-Falb methods for analysis of Lur'e systems \cite{carrasco2019convex}.
Therefore, the proposed method can be applied to Lur\'e-type nonlinear systems.

\bibliographystyle{IEEEtran}
\bibliography{reference}

\end{document}